\tikzset{>={Stealth[scale=1.5, bend]}}
\theoremstyle{plain}
\newtheorem{theorem}{Theorem}
\newtheorem{prop}{Proposition}
\newtheorem{cor}{Corollary}
\theoremstyle{definition}
\newtheorem{definition}{Definition}
\newtheorem{remark}{Remark}
\newcommand\com[1]{}
\newcommand\C{{\mathbb C}}
\newcommand\D{\mathcal{D}}
\newcommand\g{{\mathfrak g}}
\newcommand\h{\mathfrak{h}}
\newcommand\fp{{\frak p}}
\newcommand\m{{\frak m}}
\newcommand\op[1]{\mathop{\rm #1}\nolimits}
\newcommand\p{\partial}
\newcommand\R{{\mathbb R}}
\newcommand\Z{{\mathbb Z}}
\newcommand\chh{\raisebox{2pt}{$\chi$}}
\newcommand\cch{{\footnotesize\raisebox{1pt}{$\chi$}}}
\begin{document}

\title[Realization of $G(3)$ and $F(4)$ supersymmetries]%
{Realization of Lie superalgebras $G(3)$ and $F(4)$ \\ as symmetries of 
supergeometries}

\author{Boris Kruglikov}
\address{Department of Mathematics and Statistics, UiT the Arctic University of Norway, Troms\o\ 9037, Norway.
\ E-mail: {\tt boris.kruglikov@uit.no}. }

\author{Andreu Llabr\'{e}s}
\address{Department of Mathematics and Statistics, UiT the Arctic University of Norway, Troms\o\ 9037, Norway.
\ E-mail: {\tt andreu.llabres@uit.no}. }

 \begin{abstract}
For every parabolic subgroup $P$ of a Lie supergroup $G$ the homogeneous superspace $G/P$
carries a $G$-invariant supergeometry. We address the quesiton whether $\g=\op{Lie}(G)$ is the maximal
symmetry of this supergeometry in the case of exceptional Lie superalgebras $G(3)$ and $F(4)$.
Our approach is to consider the negatively graded Lie superalgebras for every choice of parabolic, and to compute
the Tanaka-Weisfeiler prolongations, with reduction of the structure group when required (2 resp 3 cases),
thus realizing $G(3)$ and $F(4)$ as symmetries of supergeometries. This gives 19 inequivalent
$G(3)$-supergeometries and 55 inequivalent $F(4)$-supergeometries,
in majority of cases (17 resp 52 cases) those being encoded as vector superdistributions.
We describe those supergeometries and realize supersymmetry explicitly in some cases.
 \end{abstract}

\maketitle

\section{Introduction}\label{S1}

While classical simple Lie algebras appeared as symmetries of certain bilinear or volume forms, the
five exceptionals were first introduced abstractly. The simplest of these, the 14-dimensional Lie algebra $G_2$,
discovered in 1887, was realized as the symmetry algebra of two different Klein geometries in 1893 by
E. Cartan and F. Engel, in two successive papers in the same issue of Comptes Rendus \cite{Ca,En}.
Realizations of $F_4,E_6,E_7,E_8$ are due to A. Borel, H. Freudental, J. Tits and others
in early 1950s, see \cite{Fr,Ti}. 
Recently there was a universal approach to geometric realization of all simple Lie algebras \cite{The}.

It should be noted the above realizations of Cartan and Engel correspond to homogeneous geometries
$G_2/P_1$, $G_2/P_{12}$ and $G_2/P_2$, where $P_\cch$ are parabolic subgroups encoded by
subsets of simple roots, marked by crosses $\chh$ on the Dynkin diagram of $G_2$.
Geometric structures on those are the maximally symmetric $(2,3,5)$ distribution
and the field of rational normal curves on a contact structure of a 5-dimensional manifold.
(There are other realizations of $G_2$ as symmetries: octonion in 8D, cross product on $\R^7$,
almost complex structure on $S^6$; all these are related to the quotient $G_2/SU_3$ in the real compact version,
and there is a split counter-part.\footnote{In this paper we work exclusively over $\C$, but the results
have certain real analogs.})
A vital part of this statement is not only that $G_2$ preserves the indicated
structures, but also that it is the entire automorphism group. Locally we get the same result
for the symmetry algebra.

In \cite{KST} a similar realization was achieved for the exceptional Lie superalgebra $G(3)$ and in \cite{ST} for $F(4)$.
Each was realized twice, and each realization corresponds to generalized flag supervariety $G/P$ for
a particular choice of parabolic subgroup $P\subset G$. On the level of Lie algebras, $\fp\subset\g$
is also obtained by marking some of the nodes on the Dynkin diagram, which corresponds to a choice of
$\Z$-grading; however in the super-case there are non-conjugate choices of simple roots (hence different Dynkin diagrams)
related by odd reflections. Thus there are 4 Dynkin diagrams for $G(3)$ and 6 such for $F(4)$.
Altogether this gives 19 inequivalent parabolic subalgebras $\fp$ of $G(3)$ and 55 such subalgebras of $F(4)$.

The goal of this paper is to realize $G(3)$ and $F(4)$ as symmetries of Klein supergeometries,
corresponding to all those cases. Note that if $\g=\g_{-\nu}\oplus\dots\oplus\g_0\oplus\dots\oplus\g_\nu$ is
the $\Z$-grading corresponding to the choice of $\fp=\g_0\oplus\dots\oplus\g_\nu$, and
$\m=\g_-=\g_{-\nu}\oplus\dots\oplus\g_{-1}$ is the nilradical of the opposite parabolic, then
$\exp(\m)\subset G/P$ is an open subsupermanifold. To prove that $\g=\op{Lie}(G)$ (in our case $G(3)$ or $F(4)$)
is the entire symmetry superalgebra we consider the Tanaka-Weisfeiler prolongation of $\m$:
$\g=\op{pr}(\m)$ is the maximal $\Z$-graded superalgebra with $\g_-=\m$ and having no centralizer of $\g_{-1}$
in $\g_{\ge0}$. Similarly one defines the Tanaka-Weisfeiler prolongation $\g=\op{pr}(\m,\g_0)$ of $\m\oplus\g_0$.

We refer to \cite{L,CCF} for the basics on supergeometries and to \cite{K,Va} for Lie superalgebras.
Let us denote $\m^{\Xi}_{\cch}$, $\fp^{\Xi}_{\cch}$ the negative and non-negative (parabolic) parts
of the grading corresponding to Dynkin diagram $\Xi$ and parabolic\footnote{Parabolic subalgebra is
generated by a Borel subalgebra 
and negative root vectors $e_{-\alpha}$ for non-crossed simple roots $\alpha$.} $\chh$, so that the grading is
 \begin{equation}\label{grading}
\g=
\underbrace{\g^{\Xi,\cch}_{-\mu}\oplus\dots\oplus\g^{\Xi,\cch}_{-1}}_{\m^{\Xi}_{\cch}}
\oplus\underbrace{\g^{\Xi,\cch}_0\oplus\g^{\Xi,\cch}_{1}\dots\oplus \g^{\Xi,\cch}_{\mu}}_{\fp^{\Xi}_{\cch}}.
 \end{equation}
We will specify details, also on numeration of systems of simple roots and parabolics, in the following sections.
Irreducible case (or $|1|$-grading) corresponds to $\mu=1$, while contact grading is characterized by
$\mu=2$ and $\dim\g_{\pm2}=1$. One of our main results is:

 \begin{theorem}\label{main}
Let a simple Lie superalgebra $\g$ be either $G(3)$ or $F(4)$. Then for any choice of parabolic we have
$\op{pr}(\mathfrak{m}^{\Xi}_{\cch})=\g$
in all cases except for two contact gradings for $G(3)$ and one irreducible and two contact gradings
for $F(4)$, in which cases we have $\op{pr}(\mathfrak{m}^{\Xi}_{\cch},\g_0^{\Xi,\cch})=\g$.
 \end{theorem}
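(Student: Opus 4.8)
The plan is to reduce the theorem to a finite, case-by-case computation of Tanaka--Weisfeiler prolongations and then to isolate the genuinely exceptional gradings in which the full prolongation overshoots $\g$. First I would fix, once and for all, the four Dynkin diagrams for $G(3)$ and the six for $F(4)$ together with the root systems obtained from one another by odd reflections, and for each diagram $\Xi$ list the crossed-node subsets $\cch$ giving the $19$ resp. $55$ parabolics. For every such choice I would read off the $\Z$-grading \eqref{grading} directly from the root-space decomposition, writing $\m^{\Xi}_{\cch}=\g^{\Xi,\cch}_{-\mu}\oplus\cdots\oplus\g^{\Xi,\cch}_{-1}$ explicitly as a graded nilpotent Lie superalgebra, recording the dimension and parity of each $\g^{\Xi,\cch}_{-i}$ together with the bracket relations among the relevant root vectors.

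The observation that organizes the whole argument is that one inclusion is automatic. Since $\g$ is simple, its nonnegative part $\fp^{\Xi}_{\cch}$ contains no nonzero element centralizing $\g_{-1}$ (such an element would generate a proper graded ideal), so $\g$ is itself a transitive prolongation of $\m^{\Xi}_{\cch}$ and hence $\g\subseteq\op{pr}(\m^{\Xi}_{\cch})$. The content of the theorem is therefore the reverse inclusion, that the prolongation does not grow beyond $\g$. I would compute $\op{pr}(\m^{\Xi}_{\cch})$ inductively by degree, using the super version of Tanaka's construction: the degree-zero term is the full superalgebra $\op{der}_0(\m^{\Xi}_{\cch})$ of grading-preserving (even and odd) derivations, and for $k\ge1$ one takes $\g_k$ to be the space of degree-$k$ maps in $\bigoplus_{p<0}\op{Hom}(\g_p,\g_{p+k})$ satisfying the graded derivation condition $f([X,Y])=[f(X),Y]\pm[X,f(Y)]$, with Koszul signs dictated by the parities of $X,Y$. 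At each step this is linear algebra on the finitely many generators in $\g_{-1}$ (and $\g_{-2}$ when $\mu\ge2$), and the goal is to verify $\g_k=0$ for $k>\mu$.

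The main obstacle is the handful of cases where $\op{der}_0(\m^{\Xi}_{\cch})$ is strictly larger than $\g_0^{\Xi,\cch}$, so that the full prolongation overshoots: the two contact gradings of $G(3)$ and the one irreducible and two contact gradings of $F(4)$. For a contact grading $\m^{\Xi}_{\cch}$ is a super-Heisenberg algebra with one-dimensional $\g_{-2}$, and its grading-preserving derivations fill out the full conformal ortho-symplectic-type superalgebra of $\g_{-1}$, whose semisimple part exceeds the actual $\g_0^{\Xi,\cch}$; for the irreducible ($|1|$-graded) case $\g_{-1}$ is abelian and $\op{der}_0(\m^{\Xi}_{\cch})=\mathfrak{gl}(\g_{-1})\supsetneq\g_0^{\Xi,\cch}$. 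In each of these five cases I would first verify the strict inequality $\op{der}_0(\m^{\Xi}_{\cch})\supsetneq\g_0^{\Xi,\cch}$ explicitly, then identify $\g_0^{\Xi,\cch}$ as the stabilizer of the extra invariant tensor encoding the reduced supergeometry, and finally recompute the constrained prolongation $\op{pr}(\m^{\Xi}_{\cch},\g_0^{\Xi,\cch})$ with this prescribed degree-zero part, showing it terminates exactly at $\g$.

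The remaining labor is bookkeeping: organizing the $74$ cases so that isomorphic gradings occurring for different $\Xi$ are identified, and exploiting odd reflections to transport a completed prolongation from one diagram to an adjacent one, which should cut the number of independent computations substantially. I expect the delicate points to be the sign bookkeeping in the super-derivation equations and the precise identification of the reducing tensor in the five exceptional cases; once those are pinned down, the equality $\op{pr}(\m^{\Xi}_{\cch})=\g$, respectively $\op{pr}(\m^{\Xi}_{\cch},\g_0^{\Xi,\cch})=\g$, in every case follows from the degree-by-degree vanishing of the would-be higher prolongations.
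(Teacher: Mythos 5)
Your proposal is correct and follows essentially the same route as the paper: a case-by-case, degree-by-degree Tanaka--Weisfeiler prolongation computed as linear algebra on the root-space data, with the five exceptional (contact and irreducible) gradings handled by prescribing the $\g_0$-reduction and restarting the prolongation from degree one. The only difference is that the paper carries out this linear algebra symbolically in \textsc{Maple} rather than by hand, precisely because the $19+55$ cases make a manual verification impractical.
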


The indicated supergeometries are described in Sections \ref{S3}-\ref{S4}. We recall in details the definition of
the Tanaka-Weisfeiler prolongation \cite{Ta} in Section \ref{S2}, before proving the above theorem in Section \ref{S5}.
The proof is entirely computer based, and we will explain the reason for this approach in that section.
Note that by the very construction, Lie superalgebra $\g$ acts by symmetries of the distributions/induced geometries
on $G/P^{\Xi}_{\cch}$. According to \cite{KST2} the symmetry is majorized by $\op{pr}(\mathfrak{m}^{\Xi}_{\cch})$
or $\op{pr}(\mathfrak{m}^{\Xi}_{\cch},\g_0^{\Xi,\cch})$ respectively. Hence we get:

 \begin{cor}\label{cor}
There are 19 distinct supergeometries with symmetry $G(3)$ and 55 distinct supergeometries with symmetry $F(4)$.
They are supported on generalized flag supervarieties $G/P$ for various parabolics $P$.
In the case of $G(3)$, 17 of those supergeometries are supervector distributions, while 2 are
reductions of the structure group for the odd and mixed parity contact structures on supermanifolds
of dimensions $(1|7)$ and $(5|4)$ respectively.
In the case of $F(4)$, 52 of those supergeometries are supervector distributions, while 1 is a $G_0$-structure
on a manifold of dimension $(6|4)$ and 2 are
reductions of the structure group for the odd and mixed parity contact structures on supermanifolds
of dimensions $(1|8)$ and $(7|4)$ respectively.
 \end{cor}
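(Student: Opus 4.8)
The plan is to derive the Corollary from Theorem~\ref{main} together with the upper bound of \cite{KST2}, supplemented by a case-by-case reading of the gradings \eqref{grading}. First I would record the two halves of the symmetry statement. On one hand, $\g=\op{Lie}(G)$ acts by automorphisms of the homogeneous supergeometry on $G/P^{\Xi}_{\cch}$ by construction, giving the lower bound $\g\subseteq\op{sym}$. On the other hand, \cite{KST2} bounds the symmetry superalgebra from above by the relevant Tanaka--Weisfeiler prolongation, which Theorem~\ref{main} identifies with $\g$ itself. In the cases with $\mu\ge2$ this is $\op{pr}(\m^{\Xi}_{\cch})$, so the distribution $\m=\g_-$ alone pins down the symmetry; in the three exceptional contact gradings and the one irreducible $F(4)$ grading the structure admits extra automorphisms, and one must instead use $\op{pr}(\m^{\Xi}_{\cch},\g_0^{\Xi,\cch})$, i.e. retain the $\g_0$-reduction of the structure group. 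Combining the two bounds yields that $\g$ is exactly the full symmetry superalgebra in each case.

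Next I would settle the enumeration $19$ and $55$. This is combinatorial: each of $G(3)$ and $F(4)$ has a fixed number of non-conjugate systems of simple roots ($4$ resp. $6$), hence that many Dynkin diagrams $\Xi$ linked by odd reflections, and for each $\Xi$ the parabolics correspond to nonempty subsets of crossed nodes $\cch$. I would list these subsets and then quotient by the equivalences induced by diagram symmetries and by odd reflections, an odd reflection identifying a parabolic on $\Xi$ with one on a neighbouring diagram whenever the crossed set is compatible. The surviving inequivalent classes number $19$ for $G(3)$ and $55$ for $F(4)$; this tabulation is exactly the bookkeeping carried out in Sections~\ref{S3}--\ref{S4}, which I would invoke rather than repeat.

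The geometric classification is then read off the depth $\mu$ and the top piece $\g_{-\mu}$ of each grading. When $\mu=1$ the bracket-generating distribution $\m=\g_{-1}$ fills the whole tangent space, so the induced object is a first-order $G_0$-structure rather than a distribution; for $F(4)$ this occurs in exactly one irreducible grading, whose open cell $\exp(\m)\subset G/P$ has dimension $(\dim(\g_{-1})_{\bar0}\,|\,\dim(\g_{-1})_{\bar1})=(6|4)$. When $\mu=2$ and $\dim\g_{\pm2}=1$ the geometry is contact, and its symmetry exceeds $\g$ unless the $\g_0$-reduction is retained; these are the two $G(3)$ cases (the odd contact structure of dimension $(1|7)$, where $\g_{-2}$ is odd so that $\exp(\m)$ has fibre data $\g_{-1}=1|6$, and the mixed-parity contact structure of dimension $(5|4)$) and the two $F(4)$ cases (dimensions $(1|8)$ and $(7|4)$). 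The remaining parabolics yield honest supervector distributions, $17$ for $G(3)$ and $52$ for $F(4)$, each dimension $(p|q)=\dim\m$ being computed directly from the root-space decomposition. Finally, pairwise inequivalence follows because the symbol algebra $\m^{\Xi}_{\cch}$ (and, in the four structure-reduction cases, the pair $(\m,\g_0)$) is a complete graded invariant of the geometry by Tanaka theory, and distinct parabolics produce non-isomorphic graded symbols.

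The main obstacle I anticipate is not the symmetry statement, which Theorem~\ref{main} and \cite{KST2} deliver, but the clean separation of the four structure-reduction cases (one $G_0$-structure and three contact reductions) from the $69$ distributions, since it requires verifying in each of those gradings that the naked distribution $\m$ has prolongation strictly larger than $\g$, forcing the use of $\op{pr}(\m,\g_0)$. This is precisely the content of the exceptional list in Theorem~\ref{main}, so the remaining work reduces to matching that list against the criteria $\mu=2,\ \dim\g_{\pm2}=1$ (contact) and $\mu=1$ (irreducible), and reading the parities of $\g_{-1}$ and $\g_{-2}$ to name the supermanifold dimensions.
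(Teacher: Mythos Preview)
Your approach is essentially the paper's own: lower bound from the natural $\g$-action on $G/P$, upper bound from the Tanaka--Weisfeiler prolongation via \cite{KST2}, and Theorem~\ref{main} to close the sandwich; the enumeration and geometric type are then read off the gradings in Sections~\ref{S3}--\ref{S4}. That logic is sound.

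There are, however, several factual slips you should fix. First, there are \emph{five} structure-reduction cases, not four: two contact gradings for $G(3)$ and three cases for $F(4)$ (one irreducible plus two contact), so your phrase ``three contact reductions'' undercounts by one. Second, your description of the odd $G(3)$ contact structure is garbled: for $\fp^{\rm I}_1$ one has $\g_{-1}$ of dimension $(0|7)$ (purely odd) and $\g_{-2}$ of dimension $(1|0)$ (even), not ``$\g_{-2}$ odd'' with ``$\g_{-1}=1|6$''. The name ``odd contact'' refers to the contact distribution $\g_{-1}$ being odd, while the derived direction $\g_{-2}$ is even; this is what makes $\dim M=(1|7)$. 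Finally, your last sentence asserts that distinct parabolics yield non-isomorphic graded symbols $\m^\Xi_\cch$; this is not automatic and is precisely what the odd-reflection identifications in Figures~\ref{geometries_diagram} and \ref{F4map} already account for, so you should say that \emph{after} modding out by those identifications the remaining symbol algebras are pairwise non-isomorphic, as the tables in Sections~\ref{S3}--\ref{S4} and Appendix~\ref{S9} exhibit.
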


The structure of superdistributions $\D$ will be clear from the list of different positive roots given in Sections \ref{S3}-\ref{S4};
growth vectors of these $\D$ are provided in the appendix \ref{S9} (it is not difficult also to provide
the description of the corresponding group $G_0$ as was done for all parabolics of $G(3)$ in \cite{KST} and
for maximal parabolics of $F(4)$ in \cite{ST}, but we will not need it). It can be also extracted from
the supplementary \textsc{Maple} file.

Let us note that global vector fields on generalized flag supervarieties $G/P$ were computed for several
classical Lie superalgebras $\g$ (but not for exceptionals) in \cite{OS,V1,V2} with the conclusion that
they are fundamental (equal to $\g$) in many cases. Relation of this global problem to the local problem,
discussed in the current paper, is via the Bott-Borel-Weil theorem \cite{Ko}, which does not hold
in the same strength in the supersetting as in the classical situation, see e.g.\ \cite{Co}.
We will discuss this more in 
Section \ref{S8}.

Finally we ask whether the above supergeometries can be encoded in the language
of differential equations, similar as $G(3)$ and $F(4)$ were realized as supersymmetries of
differential equations in \cite{KST,ST}, leading to super Hilbert-Cartan and exceptionally simple PDEs of loc.\ cit.
In fact, the first realization of $G_2$ as a symmetry of a differential equation is due to E.\,Cartan 1893,
in the form of overdetermined system of PDEs, and later also in the form of ODE
(Hilbert-Cartan equation: implicitly in 1910, explicitly in 1912 and 1914).

In the classical situation such realization is clearly possible, for instance one can encode the equation
for integral curves of the distribution as an underdetermined ODE (so-called Monge equation from control theory). 
The situation in the supercase is more delicate. 
We will show that integral curves may be insufficient to encode the distribution, even though they
can encode the underlying subbundle of the reduced tangent bundle. 
However in some cases we can exploit higher dimensional integral submanifolds,
and also other means to encode symmetry of the geometry via differential equations. 
This is subject of Sections \ref{S6}-\ref{S7},
where we also discuss the structure reductions.


\bigskip

{\bf Acknowledgment.}
The authors thank Andrea Santi for useful discussions.
The research leading to our results has received funding from
the Norwegian Financial Mechanism 2014-2021 (project registration number 2019/34/H/ST1/00636),
the Polish National Science Centre (NCN grant number 2018/29/B/ST1/02583),
and the Tromsø Research Foundation (project “Pure Mathematics in Norway”).
It was also supported by the UiT Aurora project MASCOT.

\section{Vector superdistributions and flag supervarieties}\label{S2}

Let $M=(M_o, \mathcal{A}_M)$ be a supermanifold, i.e.\ $\mathcal{A}_M$ is a $\Z_2$-graded sheaf
of superfunctions on the underlying manifold $M_o$.
A vector superdistribution (or distribution on a supermanifold) is a (graded) $\mathcal{A}_M$-subsheaf
$\D$ of the tangent sheaf $\mathcal{T}M=\op{Der}(\mathcal{A}_M)$ of $M$ that is projective
(locally a direct factor) \cite{Va}.
Any such $\D$ induces a vector subbundle $D=\op{ev}(\D)\subset TM|_{M_o}$ of the reduced tangent bundle, 
defined as $D=\D|_{M_o}=\cup_{x\in M_o}\D|_x$, where $\D|_x$ is the evaluation of $\D$ at $x\in M_o$ 
(this bundle over $M_o$ does not fully determine $\D$).

The weak derived flag of $\D$ is defined as follows:
 \begin{equation}\label{eq:weakderivedflag}
\D^{1}=\D \subset \D^{2}\subset\cdots\subset\D^{k}\subset\cdots\;,\qquad
\D^{k+1} = [\D, \D^{k}].
 \end{equation}
The distribution $\D$ on $M$ is {\em regular\/} if $\D^k$ are vector superdistributions for all $k>0$.
In this case, $D^k=\D^k|_{M_o}=\cup_{x\in M_o}\D^k|_x$ is a vector bundle over $M_o$ for every $k>0$.
The distribution is completely nonholonomic (bracket-generating) if $\D^{\mu}=\mathcal TM$ for some
(minimal) $\mu$, called the depth of $\D$.

Setting $\g_{-k}(x)=(\D^{k}|_x)/(\D^{k-1}|_x)$, the symbol algebra (also called Carnot algebra) at $x\in M_o$ is
$\m_x=\bigoplus_{k<0}\g_k(x)$. Note that $\g_k=0$ for all $k<-\mu$. More generally, one
may consider the stalk $\D_x^k$ of $\D^k$ at $x\in M_o$ as a module over the local ring $(\mathcal{A}_M)_x$
and set $\op{gr}(\mathcal T_xM)=\oplus_{k>0} \op{gr}(\mathcal T_xM)_{-k}$, where
$\op{gr}(\mathcal T_xM)_{-k}=\D_x^k/\D_x^{k-1}$.
This is naturally a graded Lie superalgebra free over $(\mathcal{A}_M)_x$.

 \begin{definition}
A superdistribution $\D$ is strongly regular if there exists a negatively-graded Lie superalgebra
$\m=\bigoplus_{0<k\leq\mu}\m_{-k}$ such that $\op{gr}(\mathcal T_xM)\cong (\mathcal{A}_M)_x\otimes \m$
at any $x\in M_o$, as graded Lie superalgebras over $(\mathcal{A}_M)_x$.
 \end{definition}

The superdistributions arising in this paper are all strongly regular.

 \begin{definition}\label{def prolongation}
The Tanaka-Weisfeiler prolongation of $\m$ is the unique (possibly infinite-dimensional)
$\mathbb Z$-graded Lie superalgebra
 $$\op{pr}(\m)=\bigoplus_{k=-\mu}^{+\infty} \g_k$$
that extends $\m$, is transitive ($[X,\g_{-1}]\neq0$ for $0\neq X\in\g_{\ge0}$) and is maximal with these properties.

If $\g_0\subset\op{der}_0(\m)$ is a Lie subsuperalgebra of grade-preserving derivations of $\m$ then
 $$\op{pr}(\m,\g_0)=\bigoplus_{k=-\mu}^{+\infty} \g_k$$
is the prolongation defined by the same properties but extending $\m\oplus\g_0$.
 \end{definition}

The proof of the existence and uniqueness of this algebraic prolongation, given in the classical case in \cite{Ta}, 
extends verbatim to Lie superalgebras. The prolongation is finite if $\g_k=0$ for some $k\ge0$.

Note that for a graded Lie superalgebra $\g$ with $\m=\g_{<0}$ the property $\op{pr}(\m)=\g$
is equivalent to the equality $H^1(\m,\g)_{\ge0}=0$, while $\op{pr}(\m,\g_0)=\g$
is equivalent to the equality $H^1(\m,\g)_+=0$, see e.g.\ \cite{KST}.
Here we refer to the natural grading of the Chevalley-Eilenberg
cohomology of Lie superalgebra $\m$ with values in the graded $\m$-module $\g$. Namely, the
cohomology in grading $k$ of the following complex
 $$
0\to\g\longrightarrow\m^*\otimes\g\longrightarrow\Lambda^2\m^*\otimes\g\longrightarrow\dots
 $$

For a basic classical Lie superalgebra its gradings $\g=\oplus_{k=-\mu}^{\mu}\g_k$
are bijective (up to conjugation) with parabolic subalgebras $\fp\subset\g$ \cite{K}.
The filtration $\g^{-k}=\oplus_{i\ge-k}\g_i$ is $\fp$-invariant.

Let $G$ be a Lie supergroup with subgroup $P$, having Lie superalgebras $\g\supset\fp$ respectively.
Consider the homogeneous space (generalized flag supervariety) $G/P$ in the sense of \cite{M,Va}.
This superspace possesses a superdistribution $\D$ corresponding to $\g^{-1}$ and
the fundamental property ($\g_{-1}$ is bracket-generating) implies that the associated weak derived flag $\D^k$
corresponds to $\g^{-k}$.

Another, so-called standard model is $\exp(\m)$, which is an open subsupermanifold of $G/P$.
It also contains the vector superdistribution $\D$, and in both cases $\g$ acts as a symmetry algebra.
Our goal is to prove that this is the entire symmetry algebra for the induced geometry.

A relation between the generalized flag supervarieties is the following twistor correspondence:
nested parabolics $\fp\subset\mathfrak{q}\subset\g$ determine gradings
 $$
\g=\m^{\fp}_{-}\oplus\fp_0\oplus\fp_+=\m^{\mathfrak{q}}_{-}\oplus\mathfrak{q}_0\oplus\mathfrak{q}_+
 $$
with $\mathfrak{q}_+\subset\fp_+$ and $\fp_0^{ss}\subset\mathfrak{q}_0^{ss}$ (for semisimple parts).
We get the natural map of supergeometries
 $$
G/P\longrightarrow G/Q,
 $$
where the left and right hand sides are the correspondence and twistor spaces respectively. For $G(3)$ and $F(4)$
flag supervarieties this twistor correspondence is shown on Figures \ref{geometries_diagram} and \ref{F4map} respectively.

In what follows we also use the following observation about Weyl reflection groupoid.
Assume that two system of simple roots are related by an odd reflection \cite{Se} at odd simple root $\alpha_i$,
or equivalently at grey node $i$ of the Dynkin diagrams $\Xi$, $\Xi'$ with nodes $N,N'$, and that the remaining nodes are
premuted by the bijection $z:N\to N'$, $z(i)=i'$. Then for a set $\chh\subset N\setminus\{i\}$ the parabolic
subsuperalgebra $\fp_\cch^\Xi$ in $\g$ is isomorphic to $\fp_{\cch'}^{\Xi'}$ with $\chh'=z(\chh)\subset N\setminus\{i'\}$.
The same is true for the generalized flag supervarieties $M^\Xi_\cch$
(which is the global model $G/P^\Xi_\cch$ or its local version $\exp(\m^\Xi_\cch)$), 
and this is indicated on Figures \ref{geometries_diagram} and \ref{F4map}.

\section{\(G(3)\)-supergeometries}\label{S3}

The Lie superalgebra \(G(3)\) has 4 different root systems up to \(\mathcal{W}\)-equivalence \cite{FSS}, where \(\mathcal{W}\) is the Weyl group generated by all even reflections on even roots. Each of those root systems has 3 simple roots \(\alpha_1\), \(\alpha_2\), \(\alpha_3\). From each of the 4 inequivalent root system, one can obtain the other 3 root systems by odd reflections \cite{Se} (dashed lines on Figure \ref{dynkinG3}) see \cite{KST} for details.

\begin{figure}[H]
\begin{center}
\includegraphics[width=0.6\textwidth]{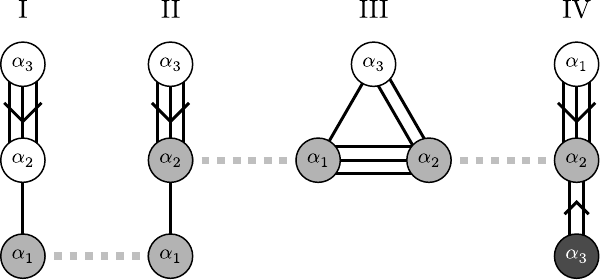}
\caption{\(\mathcal{W}\)-inequivalent Dynkin diagrams of \(G(3)\).}
\label{dynkinG3}
\end{center}
\end{figure}

The 4 Dynkin diagrams are labeled with roman numerals. The Cartan subalgebra \(\mathfrak{h}\) of \(\mathfrak{g}\) is a subalgebra of \(\mathfrak{g}_{\overline 0}\). The \(G(3)\) root system \(\Delta = \Delta_{\overline 0} \cup \Delta_{\overline 1} \subset \mathfrak{h}^*\setminus \{0\}\) is given by
\begin{equation}
\Delta _{\overline 0}= \left\{ \pm 2\delta, \pm \varepsilon_i, \varepsilon_i-\varepsilon_j \right\} \,,\quad\quad \Delta_{\overline 1} = \left\{ \pm \delta, \pm \delta \pm \varepsilon_i \right\}
\label{even_odd_dual_cartan-G3}
\end{equation}
where \(\delta, \varepsilon_1,\varepsilon_2,\varepsilon_3\) are vectors in 3-dimensional space \(\mathfrak{h}^*\) satisfying relation \(\varepsilon_1+\varepsilon_2+\varepsilon_3=0\). 
Their scalar products with respect to the Killing form are: \(\langle \varepsilon_i,\varepsilon_j \rangle =1-3\delta _{ij}\), \(\langle \delta,\delta \rangle =2\), \(\langle \varepsilon_i,\delta \rangle =0\).

For each Dynkin diagram label \(\Xi\in \left\{ \text{I},\text{II},\text{III},\text{IV} \right\} \), the corresponding simple root system \(\Pi_\Xi= \left\{ \alpha_1, \alpha_2, \alpha_3 \right\} \) is defined in table \ref{roots}.
Positive roots are nontrivial  linear combinations of those with non-negative coefficients.

\begin{table}[H]
\begin{center}
\(\begin{array}{|c|cccc|}
\hline
& \text{I} & \text{II} & \text{III} & \text{IV}\\\hline
\alpha_1 & \delta - \varepsilon_1 - \varepsilon_2 & \varepsilon_1+\varepsilon_2-\delta & \varepsilon_2-\delta & \varepsilon_2-\varepsilon_1\\
\alpha_2 & \varepsilon_1 & \delta-\varepsilon_2 & \delta-\varepsilon_1 & \varepsilon_1-\delta\\
\alpha_3 & \varepsilon_2 - \varepsilon_1 & \varepsilon_2-\varepsilon_1 & \varepsilon_1 & \delta\\\hline
\end{array}\)
\end{center}
\caption{\(\Pi_\Xi\) for each \(\Xi\in \left\{ \text{I}, \text{II}, \text{III}, \text{IV} \right\} \) in \(G(3)\).}
\label{roots}
\end{table}

A choice of root system type \(\Xi\) together with a choice of a parabolic subgroup \(P^\Xi_{\cch}\), with \(\chh\in \mathcal{P}(\left\{1,2,3 \right\})\setminus \left\{ \emptyset \right\}  \),  gives one of 19 possible supergeometries
\(G(3)/P^\Xi_\cch\), shown in Figure \ref{geometries_diagram}.

\begin{figure}[H]
\begin{center}
\includegraphics[width=0.85\textwidth]{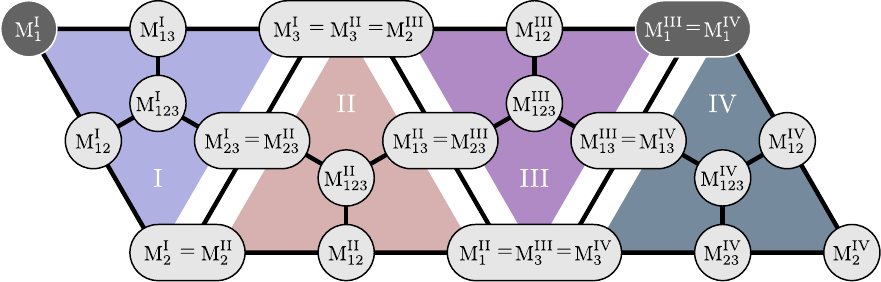}
\end{center}
\caption{Map of \(G(3)\)-supergeometries. Dark nodes correspond to supergeometries that require a \(\mathfrak{g}_0\)-reduction. Edges in the diagram connect supergeometries \(M ^{\Xi}_{\cch_1}\) and \(M ^{\Xi}_{\cch_2}\) such that \(|\chh_1\ominus\chh_2|=1\) (cardinal of symmetric difference is one).}
\label{geometries_diagram}
\end{figure}

Using definitions in table \ref{roots}, the linear combinations of \(\alpha_1, \alpha_2, \alpha_3\) that belong to \(\Delta_{\overline 0}\) and \(\Delta_{\overline 1}\) in \eqref{even_odd_dual_cartan-G3} define even and odd roots respectively for the root system. In Table \ref{negative roots G3}, there are the negative roots, which are 
nontrivial linear combinations of simple roots with non-positive coefficients.

\def\s{0.4}
\def\h{-0.4}
\def\a{0.17}
\def\b{0.05}
\definecolor{colortwo}{rgb}{0.7,0.7,0.9}
\definecolor{colorthree}{rgb}{0.85,0.7,0.7}
\definecolor{colorfour}{rgb}{0.47,0.55,0.62}
\def\czero{white}
\def\cone{gray!20}
\def\ctwo{colortwo}
\def\cthree{colorthree}
\def\cfour{colorfour}
\def\c{\czero}
\newcommand{\numbertocolor}[1]{
\ifthenelse{#1=0}{\def\c{\czero}}{}
\ifthenelse{#1=1}{\def\c{\cone}}{}
\ifthenelse{#1=2}{\def\c{\ctwo}}{}
\ifthenelse{#1=3}{\def\c{\cthree}}{}
\ifthenelse{#1=4}{\def\c{\cfour}}{}
}
\pgfmathsetmacro{\bl}{2*\h-\b}
\def\arraycode{
\numbertocolor{\j}
\ifthenelse{\j>0}{
\foreach \m in {1,...,\j}
\draw[fill=\c] (\i*\s,1*\h) circle ({\a-(\m-1)*\b});}{
\draw[densely dotted, fill=\c] (\i*\s,1*\h) circle (\a);}
\numbertocolor{\k}
\ifthenelse{\k>0}{
\foreach \m in {1,...,\k}
\draw[fill=\c] (\i*\s,2*\h) circle ({\a-(\m-1)*\b});}{
\draw[densely dotted, fill=\c] (\i*\s,2*\h) circle (\a);}
\numbertocolor{\l}
\ifthenelse{\l>0}{
\foreach \m in {1,...,\l}
\draw[fill=\c] (\i*\s,3*\h) circle ({\a-(\m-1)*\b});}{
\draw[densely dotted, fill=\c] (\i*\s,3*\h) circle (\a);}}
\begin{table}[H]
\begin{center}
\(\begin{array}{|c|ccccc|}\hline
\tikz[baseline=-1mm]\node{\(\Delta^{\Xi}_{\rho,-}\)};& \Xi: &\text{I}\hspace{0.2cm}\phantom{i} & \text{II} & \text{III} & \text{IV}\phantom{a} \\\hline & & & & &\\[-2mm]
\tikz[baseline=-1mm]\node[rotate=0]{\(\rho=\overline 0\)};& \begin{array}{c}
\alpha_1 \\[-1mm] \alpha_2 \\[-1mm] \alpha_3
\end{array} &\hspace{-0.3cm}
\begin{tikzpicture}[baseline=\bl cm]
\foreach \i/\j/\k/\l in {0/0/1/0, 1/0/0/1, 2/0/1/1, 3/0/2/1, 4/0/3/1, 5/0/3/2, 6/2/4/2}
{\arraycode}
\end{tikzpicture}
&
\begin{tikzpicture}[baseline=\bl cm]
\foreach \i/\j/\k/\l in {0/0/0/1, 1/1/1/0, 2/1/1/1, 3/2/2/1, 4/3/3/1, 5/3/3/2, 6/2/4/2}
{\arraycode}
\end{tikzpicture}
&
\begin{tikzpicture}[baseline=\bl cm]
\foreach \i/\j/\k/\l in {0/0/0/1, 1/1/1/0, 2/1/1/1, 3/0/2/2, 4/1/1/2, 5/1/1/3, 6/2/2/3}
{\arraycode}
\end{tikzpicture}
&
\begin{tikzpicture}[baseline=\bl cm]
\foreach \i/\j/\k/\l in {0/1/0/0, 1/0/1/1, 2/0/0/2, 3/1/1/1, 4/1/2/2, 5/1/3/3, 6/2/3/3}
{\arraycode}
\end{tikzpicture}\phantom{a}

\\[0.7cm]\hline & & & & &\\[-2mm]
\tikz[baseline=-1mm]\node[rotate=0]{\(\rho=\overline 1\)};& \begin{array}{c}
\alpha_1 \\[-1mm] \alpha_2 \\[-1mm] \alpha_3
\end{array} &\hspace{-0.3cm}
\begin{tikzpicture}[baseline=\bl cm]
\foreach \i/\j/\k/\l in {0/1/0/0, 1/1/1/0, 2/1/1/1, 3/1/2/1, 4/1/3/1, 5/1/3/2, 6/1/4/2}
{\arraycode}
\end{tikzpicture}
&
\begin{tikzpicture}[baseline=\bl cm]
\foreach \i/\j/\k/\l in {0/1/0/0, 1/0/1/0, 2/0/1/1, 3/1/2/1, 4/2/3/1, 5/2/3/2, 6/3/4/2}
{\arraycode}
\end{tikzpicture}
&
\begin{tikzpicture}[baseline=\bl cm]
\foreach \i/\j/\k/\l in {0/1/0/0, 1/0/1/0, 2/1/0/1, 3/0/1/1, 4/0/1/2, 5/1/2/2, 6/1/2/3}
{\arraycode}
\end{tikzpicture}
&
\begin{tikzpicture}[baseline=\bl cm]
\foreach \i/\j/\k/\l in {0/0/1/0, 1/0/0/1, 2/1/1/0, 3/0/1/2, 4/1/1/2, 5/1/2/1, 6/1/2/3}
{\arraycode}
\end{tikzpicture}\phantom{a}\\[0.7cm]\hline
\end{array}\)
\end{center}
\caption{Negative roots for each root system \(\Pi_\Xi\) in \(G(3)\).}
\label{negative roots G3}
\end{table}

The symbols \tikz[baseline=-1mm]\draw[densely dotted, fill=\czero] circle (\a);,
\tikz[baseline=-1mm]\draw[fill=\cone] circle (\a);,
\tikz[baseline=-1mm]\foreach\i in {0,1}\draw[fill=\ctwo] circle (\a-\i*\b);,
\tikz[baseline=-1mm]\foreach\i in {0,1,2}\draw[fill=\cthree] circle (\a-\i*\b);,
\tikz[baseline=-1mm]\foreach\i in {0,1,2,3}\draw[fill=\cfour] circle (\a-\i*\b); in Table \ref{negative roots G3} represent numbers \(0,-1,-2,-3,-4\) respectively, and are displayed in \(3\times 7\) arrays, where each column corresponds to a different root \(\alpha= m_1\alpha_1 + m_2\alpha_2 + m_3\alpha_3\), and number in row \(i\) gives the coefficient \(m_i\) of that root.

Observe that for each root system we have 7 even and 7 odd positive roots. Those numbers double when we consider the corresponding negative roots. The even part is increased by $\dim\mathfrak{h}=3$ because of
the Cartan subalgebra. This gives the expected total of \(\dim(G(3))=(17|14)\).

Given $\Xi,\chh$ we deduce from Table \ref{negative roots G3} the grading \eqref{grading} of $\m^\Xi_\cch\subset\g$,
where $\g_{-s}$ is the sum of root vectors $e_\alpha$, $\alpha= m_1\alpha_1 + m_2\alpha_2 + m_3\alpha_3$,
and $-s=m_1+m_2+m_3$ is the weight of the corresponding column. The positive grading of $\g$ is defined
simlarly, and $\g_0$ is the sum of the root vectors with zero grading and the Cartan subalgebra $\mathfrak{h}$.
Then $\m^\Xi_\cch=\sum_{k<0}\g_k$ and $\fp^\Xi_\cch=\sum_{k\ge0}\g_k$.

Both superspaces $M^\Xi_\cch=\exp\bigl(\m^\Xi_\cch\bigr)$ and $G/P^\Xi_\cch$ contain a vector superdistribution $\D$
corresponding to $\g_{-1}$.
Since $[e_\alpha,e_\beta]=n_{\alpha\beta}e_{\alpha+\beta}$ for some $n_{\alpha\beta}\neq0$
whenever $\alpha+\beta\in\Delta$ is a 
root, the structure of the Carnot algebra
can be derived from Table \ref{negative roots G3} (and supplementary maple file).

Thus the supergeometries associated to $G(3)$ are vector super-distributions, except for two
cases marked as dark nodes on Figure \ref{geometries_diagram}. Those are two contact gradings
(odd and mixed) and the Tanaka-Weisfeiler prolongation of $\m$ in those cases is infinite (contact superalgebras).
The structure group reduction $G_0\subset COSp(\g_{-1})$ , as indicated in Table \ref{reduction G3},
is required in these exceptional cases.

\begin{table}[H]\begin{center}
\(
\begin{array}{|c|c|c|c|}
\hline
\text{Symbol algebra } \mathfrak{m} & \text{Prolongation} \op{pr}(\m) &
\text{Algebra } \op{pr}_0(\m)=\mathfrak{der}_0(\m) & \text{Structure reduction } \mathfrak{g}_0\\
\hline
\mathfrak{m}^\text{I}_1 & \mathfrak{k}(1|7) & \mathfrak{co}(7) & G_2\oplus\C \\
\mathfrak{m}^\text{III}_1 = \mathfrak{m}^\text{IV}_1 & \mathfrak{k}(5|4) & \mathfrak{cspo}(4|4) &\mathfrak{cosp}(3|2)\\
\hline
\end{array}
\)
\caption{Exceptional \(G(3)\)-supergeometries
with \(\dim\op{pr}(\mathfrak{m}^{\Xi}_{\cch})=\infty\).}
\label{reduction G3}
\end{center}
\end{table}

\section{\(F(4)\)-supergeometries}\label{S4}

For \(F(4)\) we have 6 different root systems up to \(\mathcal{W}\)-equivalence \cite{FSS},
shown in Figure \ref{F4dynkin}. Again, the dashed nodes mark odd reflections.
Both even and odd reflections form Weyl groupoid \cite{Se} which acts transitively on the conjugacy classes
of simple root systems.

\begin{figure}[H]
\begin{center}
\includegraphics[width=0.7\textwidth]{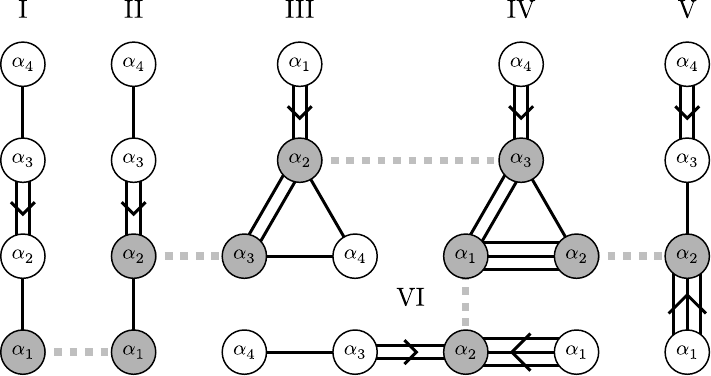}
\end{center}
\caption{\(\mathcal{W}\)-inequivalent Dynkin diagrams of \(F(4)\).}
\label{F4dynkin}
\end{figure}

The \(F(4)\) root system \(\Delta = \Delta_{\overline 0} \cup \Delta_{\overline 1} \subset \mathfrak{h}^*\setminus \{0\}\) is given by
\begin{equation}
\Delta _{\overline 0}= \left\{ \pm \delta, \pm \varepsilon_i, \pm(\varepsilon_i\pm\varepsilon_j) \right\} \,,\quad\quad \Delta_{\overline 1} = \left\{ (\pm \delta \pm \varepsilon_1 \pm \varepsilon_2 \pm \varepsilon_3)/2 \right\}\,,
\label{even_odd_dual_cartan-F4}
\end{equation}
where \(\delta, \varepsilon_1,\varepsilon_2,\varepsilon_3\) are vectors in \(\mathfrak{h}^*\) satisfying \(\langle \varepsilon_i,\varepsilon_j \rangle =\delta _{ij}\), \(\langle \delta,\delta \rangle =-3\) and \(\langle \varepsilon_i,\delta \rangle =0\) with respect to the Killing form. Through these vectors, we define the 6 simple root systems \(\Pi_\Xi = \left\{ \alpha_1, \alpha_2, \alpha_3, \alpha_4 \right\} \) shown in Table \ref{roots F4}, see \cite{ST} for details.

\begin{table}[H]
\begin{center}
\(\begin{array}{|c|ccc|}
\hline
& \text{I} & \text{II} & \text{III}\\\hline
\alpha_1 & (\delta-\varepsilon_1-\varepsilon_2-\varepsilon_3)/2 & (-\delta+\varepsilon_1+\varepsilon_2+\varepsilon_3)/2 & \varepsilon_1-\varepsilon_2\\
\alpha_2 & \varepsilon_3 & (\delta-\varepsilon_1-\varepsilon_2+\varepsilon_3)/2 & (\delta-\varepsilon_1+\varepsilon_2-\varepsilon_3)/2\\
\alpha_3 & \varepsilon_2-\varepsilon_3 & \varepsilon_2-\varepsilon_3 & (-\delta+\varepsilon_1+\varepsilon_2-\varepsilon_3)/2\\
\alpha_4 & \varepsilon_1-\varepsilon_2 & \varepsilon_1-\varepsilon_2 & \varepsilon_3
\\\hline\hline
& \text{IV} & \text{V} & \text{VI}\\\hline
\alpha_1 & (\delta+\varepsilon_1-\varepsilon_2-\varepsilon_3)/2 & \delta & \delta\\
\alpha_2 & (\delta-\varepsilon_1+\varepsilon_2+\varepsilon_3)/2 & (-\delta+\varepsilon_1-\varepsilon_2-\varepsilon_3)/2 & (-\delta-\varepsilon_1+\varepsilon_2+\varepsilon_3)/2\\
\alpha_3 & (-\delta+\varepsilon_1-\varepsilon_2+\varepsilon_3)/2 & \varepsilon_3 & \varepsilon_1-\varepsilon_2\\
\alpha_4 & \varepsilon_2-\varepsilon_3 &  \varepsilon_2-\varepsilon_3 &  \varepsilon_2-\varepsilon_3
\\\hline
\end{array}\)
\end{center}
\caption{\(\Pi_\Xi\) for each \(\Xi\in \left\{ \text{I}, \text{II}, \text{III}, \text{IV}, \text{V}, \text{VI} \right\} \) in \(F(4)\).}
\label{roots F4}
\end{table}

Analogously as in \(G(3)\), we obtain the map of \(F(4)\)-supergeometries, shown in Figure \ref{F4map}, and negative roots for each simple root system \(\Pi_\Xi\) in table \ref{negative roots F4}.

\begin{figure}[H]
\begin{center}
\includegraphics[width=\textwidth]{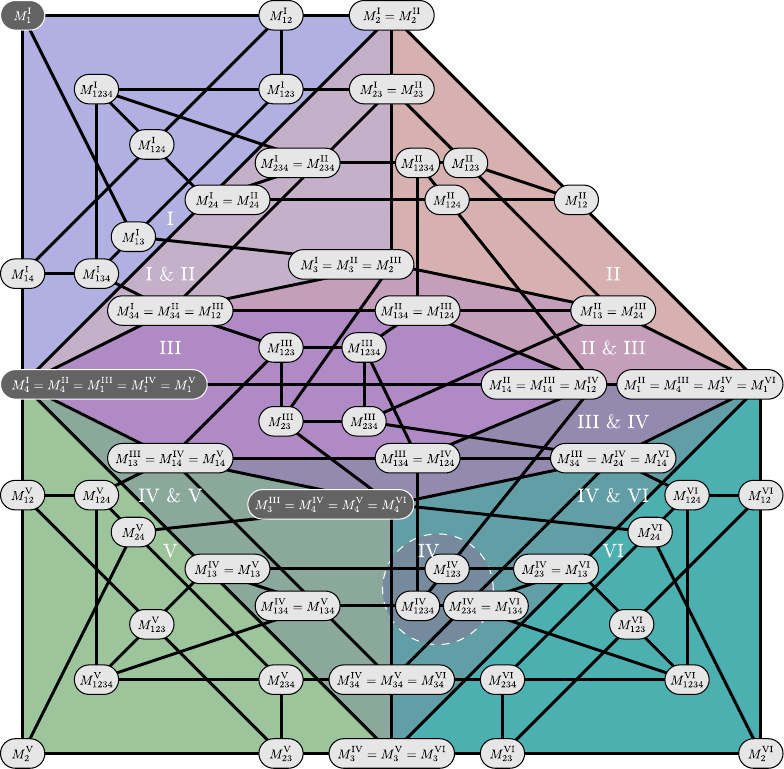}
\end{center}
\caption{Map of \(F(4)\)-supergeometries. Dark nodes correspond to supergeometries that require a \(\mathfrak{g}_0\)-reduction. Edges in the diagram connect supergeometries \(M ^{\Xi}_{\cch_1}\) and \(M ^{\Xi}_{\cch_2}\) such that \(|\chh_1\ominus\chh_2|=1\).}
\label{F4map}
\end{figure}

As opposed to Figure \ref{geometries_diagram}, in Figure \ref{F4map} the domains \(\Xi\) overlap (in this case
such diagram should be rather 3-dimensional to avoid overlaps).
In Figure \ref{F4map} the domains I, V, VI have the same shape. Domains II and IV also have the same shape among them, while domain III uses the central region of Figure \ref{F4map}.

Observe in Table \ref{negative roots F4} that we have 10 even roots and 8 odd negative roots. These numbers double when we consider the positive roots counterparts, and the even part is increased by $\dim\mathfrak{h}=4$ because of
the Cartan subalgebra. This gives the expected total of \(\operatorname{dim}(F(4))=(24|16)\).

Very similar to $G(3)$ case, the superspaces $M^\Xi_\cch=\exp\bigl(\m^\Xi_\cch\bigr)$ or $G/P^\Xi_\cch$
in the $F(4)$ case carry vector distributions $\D$, for which the associated to their derived flag filtration is the grading
of $\g$ corresponding to parabolic $\fp^\Xi_\cch$.

\pgfmathsetmacro{\bl}{3*\h+\a}
\def\arraycode{
\numbertocolor{\j}
\ifthenelse{\j>0}{
\foreach \m in {1,...,\j}
\draw[fill=\c] (\i*\s,1*\h) circle ({\a-(\m-1)*\b});}{
\draw[densely dotted, fill=\c] (\i*\s,1*\h) circle (\a);}
\numbertocolor{\k}
\ifthenelse{\k>0}{
\foreach \m in {1,...,\k}
\draw[fill=\c] (\i*\s,2*\h) circle ({\a-(\m-1)*\b});}{
\draw[densely dotted, fill=\c] (\i*\s,2*\h) circle (\a);}
\numbertocolor{\l}
\ifthenelse{\l>0}{
\foreach \m in {1,...,\l}
\draw[fill=\c] (\i*\s,3*\h) circle ({\a-(\m-1)*\b});}{
\draw[densely dotted, fill=\c] (\i*\s,3*\h) circle (\a);}
\numbertocolor{\p}
\ifthenelse{\p>0}{
\foreach \m in {1,...,\p}
\draw[fill=\c] (\i*\s,4*\h) circle ({\a-(\m-1)*\b});}{
\draw[densely dotted, fill=\c] (\i*\s,4*\h) circle (\a);}}

\begin{table}[H]
\begin{center}
\(\begin{array}{|c|cccc|}\hline
\tikz[baseline=-1mm]\node{\(\Delta^{\Xi}_{\rho,-}\)};& \Xi: &\text{I}\hspace{0.2cm}\phantom{i} & \text{II} & \text{III}\phantom{a} \\\hline & & & &\\[-2mm]
\tikz[baseline=-1mm]\node[rotate=0]{\(\rho=\overline 0\)};& \begin{array}{c}
\alpha_1 \\[-1mm] \alpha_2 \\[-1mm] \alpha_3 \\[-1mm] \alpha_4
\end{array} &\hspace{-0.3cm}
\begin{tikzpicture}[baseline=\bl cm]
\foreach \i/\j/\k/\l/\p in {0/0/1/0/0, 1/0/0/1/0, 2/0/0/0/1, 3/0/1/1/0, 4/0/0/1/1, 5/0/1/1/1, 6/0/2/1/0, 7/0/2/1/1, 8/0/2/2/1, 9/2/3/2/1}
{\arraycode}
\end{tikzpicture}
&
\begin{tikzpicture}[baseline=\bl cm]
\foreach \i/\j/\k/\l/\p in {0/0/0/1/0, 1/0/0/0/1, 2/1/1/0/0, 3/0/0/1/1, 4/1/1/1/0, 5/1/1/1/1, 6/2/2/1/0, 7/2/2/1/1, 8/2/2/2/1, 9/1/3/2/1}
{\arraycode}
\end{tikzpicture}
&
\begin{tikzpicture}[baseline=\bl cm]
\foreach \i/\j/\k/\l/\p in {0/1/0/0/0, 1/0/0/0/1, 2/0/1/1/0, 3/1/1/1/0, 4/0/1/1/1, 5/1/1/1/1, 6/1/2/0/1, 7/0/1/1/2, 8/1/1/1/2, 9/1/2/2/2}
{\arraycode}
\end{tikzpicture}

\\[0.7cm]\hline & & & &\\[-2mm]
\tikz[baseline=-1mm]\node[rotate=0]{\(\rho=\overline 1\)};& \begin{array}{c}
\alpha_1 \\[-1mm] \alpha_2 \\[-1mm] \alpha_3 \\[-1mm] \alpha_4
\end{array} &\hspace{-3mm}
\begin{tikzpicture}[baseline=\bl cm]
\foreach \i/\j/\k/\l/\p in {0/1/0/0/0, 1/1/1/0/0, 2/1/1/1/0, 3/1/1/1/1, 4/1/2/1/0, 5/1/2/1/1, 6/1/2/2/1, 7/1/3/2/1}
{\arraycode}
\end{tikzpicture}
&
\begin{tikzpicture}[baseline=\bl cm]
\foreach \i/\j/\k/\l/\p in {0/1/0/0/0, 1/0/1/0/0, 2/0/1/1/0, 3/0/1/1/1, 4/1/2/1/0, 5/1/2/1/1, 6/1/2/2/1, 7/2/3/2/1}
{\arraycode}
\end{tikzpicture}
&
\begin{tikzpicture}[baseline=\bl cm]
\foreach \i/\j/\k/\l/\p in {0/0/1/0/0, 1/0/0/1/0, 2/1/1/0/0, 3/0/1/0/1, 4/0/0/1/1, 5/1/1/0/1, 6/1/2/1/1, 7/1/2/1/2}
{\arraycode}
\end{tikzpicture}\\[0.7cm]\hline\hline
\tikz[baseline=-1mm]\node{\(\Delta^{\Xi}_{\rho,-}\)};& \Xi: &\text{IV}\hspace{0.2cm}\phantom{i} & \text{V} & \text{VI}\phantom{a} \\\hline & & & &\\[-2mm]
\tikz[baseline=-1mm]\node[rotate=0]{\(\rho=\overline 0\)};& \begin{array}{c}
\alpha_1 \\[-1mm] \alpha_2 \\[-1mm] \alpha_3 \\[-1mm] \alpha_4
\end{array} &\hspace{-0.3cm}
\begin{tikzpicture}[baseline=\bl cm]
\foreach \i/\j/\k/\l/\p in {0/0/0/0/1, 1/1/1/0/0, 2/1/0/1/0, 3/0/1/1/0, 4/1/0/1/1, 5/0/1/1/1, 6/0/2/2/1, 7/1/1/2/1, 8/1/2/3/1, 9/1/2/3/2}
{\arraycode}
\end{tikzpicture}
&
\begin{tikzpicture}[baseline=\bl cm]
\foreach \i/\j/\k/\l/\p in {0/1/0/0/0, 1/0/0/1/0, 2/0/0/0/1, 3/0/0/1/1, 4/0/0/2/1, 5/1/2/1/0, 6/1/2/1/1, 7/1/2/2/1, 8/1/2/3/1, 9/1/2/3/2}
{\arraycode}
\end{tikzpicture}
&
\begin{tikzpicture}[baseline=\bl cm]
\foreach \i/\j/\k/\l/\p in {0/1/0/0/0, 1/0/0/1/0, 2/0/0/0/1, 3/0/0/1/1, 4/1/2/1/0, 5/1/2/1/1, 6/1/2/2/1, 7/2/4/2/1, 8/2/4/3/1, 9/2/4/3/2}
{\arraycode}
\end{tikzpicture}

\\[0.7cm]\hline & & & &\\[-2mm]
\tikz[baseline=-1mm]\node[rotate=0]{\(\rho=\overline 1\)};& \begin{array}{c}
\alpha_1 \\[-1mm] \alpha_2 \\[-1mm] \alpha_3 \\[-1mm] \alpha_4
\end{array} &\hspace{-3mm}
\begin{tikzpicture}[baseline=\bl cm]
\foreach \i/\j/\k/\l/\p in {0/1/0/0/0, 1/0/1/0/0, 2/0/0/1/0, 3/0/0/1/1, 4/1/1/1/0, 5/1/1/1/1, 6/0/1/2/1, 7/1/2/2/1}
{\arraycode}
\end{tikzpicture}
&
\begin{tikzpicture}[baseline=\bl cm]
\foreach \i/\j/\k/\l/\p in {0/0/1/0/0, 1/1/1/0/0, 2/0/1/1/0, 3/1/1/1/0, 4/0/1/1/1, 5/1/1/1/1, 6/0/1/2/1, 7/1/1/2/1}
{\arraycode}
\end{tikzpicture}
&
\begin{tikzpicture}[baseline=\bl cm]
\foreach \i/\j/\k/\l/\p in {0/0/1/0/0, 1/1/1/0/0, 2/0/1/1/0, 3/1/1/1/0, 4/0/1/1/1, 5/1/1/1/1, 6/1/3/2/1, 7/2/3/2/1}
{\arraycode}
\end{tikzpicture}\\[0.7cm]\hline\end{array}\)
\end{center}
\caption{Negative roots for each root system \(\Pi_\Xi\) in \(F(4)\).}
\label{negative roots F4}
\end{table}

Thus the supergeometries associated to $F(4)$ are vector super-distributions, except for three
cases marked as dark nodes on Figure \ref{F4map}. Those are one irreducible and two contact 
(odd and mixed) gradings, and the Tanaka-Weisfeiler prolongation of $\m$ in such cases is infinite
(all vector fields and contact vector fields, respectively). The structure group reductions $G_0\subset GL(\m)$ or
$G_0\subset COSp(\g_{-1})$, as indicated in Table \ref{reduction F4}, is required in these exceptional cases.

\begin{table}[H]\begin{center}
\(
\begin{array}{|c|c|c|c|}
\hline
\text{Symbol algebra } \mathfrak{m} & \text{Prolongation} \op{pr}(\m) &
\text{Algebra } \op{pr}_0(\m)=\mathfrak{der}_0(\m) & \text{Structure reduction } \mathfrak{g}_0\\
\hline
\mathfrak{m}^\text{I}_1 & \mathfrak{k}(1|8) & \mathfrak{co}(8) & \mathfrak{cspin}(7)\\
\mathfrak{m}^\text{I}_4 = \mathfrak{m}^\text{II}_4 = \mathfrak{m}^\text{III}_1 = \mathfrak{m}^\text{IV}_1 = \mathfrak{m}^\text{V}_1 &
\mathfrak{vect}(6|4) & \mathfrak{gl}(6|4) & \mathfrak{cosp}(2|4)\\
\mathfrak{m}^\text{III}_3 = \mathfrak{m}^\text{IV}_4 = \mathfrak{m}^\text{V}_4 = \mathfrak{m}^\text{VI}_4 &
\mathfrak{k}(7|4) & \mathfrak{cosp}(6|4) & \mathfrak{cosp}(4|2;\tfrac{1}{2})\\
\hline
\end{array}
\)
\caption{Exceptional \(F(4)\)-supergeometries
with \(\dim\op{pr}(\mathfrak{m}^{\Xi}_{\cch})=\infty\).}
\label{reduction F4}
\end{center}
\end{table}

\section{Computing the prolongation and proof of the main result}\label{S5}

\subsection{Encoding $\m$}

For each choice \(\Xi\) of simple root system/Dynkin diagram we have even and odd roots, described in Tables \ref{negative roots G3} and \ref{negative roots F4}.
Every negative root is of the form \(\alpha = \sum m_i\alpha_i\) with all \(m_i\leq 0\) and it inherits the 
associated grading from \(P^\Xi_{\cch}\) given by \(\sum_{k\in\cch} m_k \in \left\{ -1,\dots,-\mu \right\} \).

The goal is to reconstruct the Lie superalgebra \(\mathfrak{m}^{\Xi}_{\cch} = \mathfrak{g}^{\Xi,\cch}_{-\mu}\oplus\dots\oplus \mathfrak{g}^{\Xi,\cch}_{-1}\), together with a possibly requested 
\(\mathfrak{g}^{\Xi,\cch}_0\)-reduction (we will see that we don't need higher order reductions) such that
\[\mathfrak{g}^{\Xi,\cch}=\op{pr}(\mathfrak{m}^{\Xi}_{\cch}, \mathfrak{g}^{\Xi,\cch}_{0})= \mathfrak{g}^{\Xi,\cch}_{-\mu}\oplus\dots\oplus \mathfrak{g}^{\Xi,\cch}_{\mu}\]
is the Lie superalgebra of the supergeometry \(M^{\Xi}_{\cch}\).

For every \(\alpha_i, \alpha_j\in \Delta^\Xi_-\), if
\begin{equation}
\alpha_i+\alpha_j=\alpha_k\in\Delta^\Xi_-
\label{sum of roots}
\end{equation}
for some \(k\), then the Lie bracket of our supergeometry satisfies \([\, v_i\,,\, v_j\,]=c_{ij}^k v_k\) (no summation) for some unknown nonzero structure constant \(c_{ij}^k\), where \(v_\ell\) is an associated root vector to root \(\alpha_\ell\) (scale is not fixed yet). Our approach is to find the structure constants for the maximal parabolics (\(\chh\) being maximal, for each \(\Xi\)), keeping the root vectors multigraded by $\chh$. Then, a change of \(\chh\) to a subset is translated 
into a change in grading of the roots vectors, while the structure relations themselves remain unchanged.

For the maximal parabolic (Borel) cases, by rescaling the root vectors, fix as many of the structure constants 
\(c_{ij}^k=1\) as possible for the indices \(k\) occurring in relations \eqref{sum of roots}. 
The remaining structure constants \(c_{ij}^k\) are computed through the Jacobi identity.
The normalized root vectors together with the obtained structure relations form a Lie superalgebra \(\mathfrak{m}^{\Xi}_{\cch} = \mathfrak{g}^{\Xi,\cch}_{-\mu}\oplus\dots\oplus \mathfrak{g}^{\Xi,\cch}_{-1}\)
(a simplified version of Serre's relations).

\subsection{The algorithm}\label{algorithm}

We compute $\mathfrak{g}_i\subset\op{pr}(\m)$ for \(i\geq0\) iteratively as follows. Consider the \(\mathbb Z\)-graded symbol algebra 
$\m=\oplus _{i>0}^\mu\mathfrak{g}_{-i}$ and assume that the algorithm has already produced 
 \[
\op{pr}_{<k}(\m)=\g_{-\mu}\oplus\dots\oplus\g_{-1}\oplus\dots\oplus\g_{k-1}.
 \]
In practical terms, this means that we got basis elements for \(\mathfrak{g}_i\) of pure grading and parity, 
and all Lie brackets that involve only elements of \(\mathfrak{g}_{<k}\).

The next step is to generate \(\mathfrak{g}_k\) through linear maps of the form \(A: \mathfrak{g}_{-1}\longrightarrow \mathfrak{g}_{k-1}\) between vector spaces, which have dimensions \(\operatorname{dim}(\mathfrak{g}_{-1})=(n_{\overline 0}\,|\,n_{\overline 1})\) and \(\operatorname{dim}(\mathfrak{g}_{k-1})=(m_{\overline 0}\,|\,m_{\overline 1})\), and hence \(A=(a_{ij})\) is a \((m_{\overline 0}+m_{\overline 1})\times (n_{\overline 0}+n_{\overline 1})\) matrix. \(A\) is considered to be
 \[
\left\{ \begin{array}{cl}
\text{even} & \text{if } a_{ij}=0 \text{ whenever } ((i\leq m_{\overline 0}) \wedge (j>n_{\overline 0}))\vee ((i>m_{\overline 0})\wedge(j\leq n_{\overline 0}))\,,\\
\text{odd} &  \text{if } a_{ij}=0 \text{ whenever } ((i\leq m_{\overline 0})\wedge(j\leq n_{\overline 0}))\vee ((i>m_{\overline 0})\wedge( j> n_{\overline 0}))\,.
\end{array} \right.
 \]
The zero matrix is both even and odd, and a nontrivial combination of even and odd matrices is neither even nor odd. 
\(A\) can always be decomposed into the sum \(A_{\overline 0}+A_{\overline 1}\) of even and odd elements.
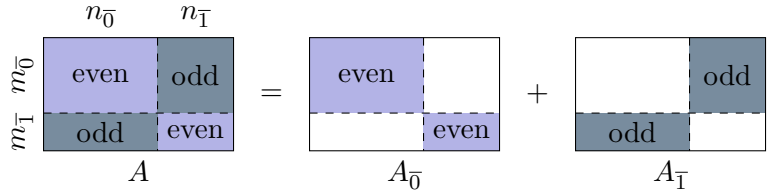
\begin{figure}[H]
\begin{center}
\begin{tikzpicture}
\def\no{1.5}
\def\nl{1}
\def\mo{1}
\def\ml{0.5}
\def\sep{1}
\def\sepl{0}
\definecolor{color even}{rgb}{0.7,0.7,0.9}
\definecolor{color odd}{rgb}{0.47, 0.55, 0.62}
\begin{scope}[fill=color even]
\fill (0,0) rectangle (\no, -\mo);
\fill (\no,-\mo) rectangle (\no+\nl, -\mo-\ml);
\fill (\no+\nl+\sep,0) rectangle (\no+\nl+\sep+\no, -\mo);
\fill (\no+\nl+\sep+\no,-\mo) rectangle (\no+\nl+\sep+\no+\nl, -\mo-\ml);
\end{scope}
\begin{scope}[fill=color odd]
\fill (\no,0) rectangle (\no+\nl, -\mo);
\fill (0,-\mo) rectangle (\no, -\mo-\ml);
\fill (2*\no+2*\nl+2*\sep+\no,0) rectangle (2*\no+2*\nl+2*\sep+\no+\nl, -\mo);
\fill (2*\no+2*\nl+2*\sep,-\mo) rectangle (2*\no+2*\nl+2*\sep+\no, -\mo-\ml);
\end{scope}
\draw[dashed] (\no, 0) to (\no, -\mo-\ml);
\draw[dashed] (0, -\mo) to (\no+\nl, -\mo);
\draw[dashed] (\no+\nl+\sep+\no, 0) to (\no+\nl+\sep+\no, -\mo-\ml);
\draw[dashed] (\no+\nl+\sep+0, -\mo) to (\no+\nl+\sep+\no+\nl, -\mo);
\draw[dashed] (\no+\nl+\sep+\no+\nl+\sep+\no, 0) to (\no+\nl+\sep+\no+\nl+\sep+\no, -\mo-\ml);
\draw[dashed] (\no+\nl+\sep+\no+\nl+\sep+0, -\mo) to (\no+\nl+\sep+\no+\nl+\sep+\no+\nl, -\mo);
\draw (0,0) rectangle (\no+\nl, -\mo-\ml);
\draw (\no+\nl+\sep,0) rectangle (\no+\nl+\sep+\no+\nl, -\mo-\ml);
\draw (\no+\nl+\sep+\no+\nl+\sep,0) rectangle (\no+\nl+\sep+\no+\nl+\sep+\no+\nl, -\mo-\ml);
\node at (\no+\nl+\sep*0.5, -\mo*0.5-\ml*0.5) {\(=\)};
\node at (\no+\nl+\sep+\no+\nl+\sep*0.5, -\mo*0.5-\ml*0.5) {\(+\)};
\node at (\no*0.5, -\mo*0.5){even};
\node at (\no+\nl*0.5, -\mo-\ml*0.5){even};
\node at (\no+\nl+\sep+\no*0.5, -\mo*0.5){even};
\node at (\no+\nl+\sep+\no+\nl*0.5, -\mo-\ml*0.5){even};
\node at (\no+\nl*0.5, -\mo*0.5){odd};
\node at (\no*0.5, -\mo-\ml*0.5){odd};
\node at (\no+\nl+\sep+\no+\nl+\sep+\no+\nl*0.5, -\mo*0.5){odd};
\node at (\no+\nl+\sep+\no+\nl+\sep+\no*0.5, -\mo-\ml*0.5){odd};
\node[above] at (\no/2, \sepl){\(n_{\overline 0}\)};
\node[above] at (\no+\nl/2, \sepl){\(n_{\overline 1}\)};
\node[rotate=90, above] at (-\sepl, -\mo/2){\(m_{\overline 0}\)};
\node[rotate=90, above] at (-\sepl, -\mo-\ml/2){\(m_{\overline 1}\)};
\node[below] at (\no/2+\nl/2, -\mo-\ml-\sepl){\(A\)};
\node[below] at (\no+\nl+\sep+\no/2+\nl/2, -\mo-\ml-\sepl){\(A _{\overline 0}\)};
\node[below] at (\no+\nl+\sep+\no+\nl+\sep+\no/2+\nl/2, -\mo-\ml-\sepl){\(A _{\overline 1}\)};
\end{tikzpicture}
\end{center}
\caption{Decomposition of matrix \(A\) into even and odd parts.}
\label{decomposition matrix}
\end{figure}

Since \(\mathcal{D}\) is bracket-generating, then \(\mathfrak{m}\) is fundamental, so the basis elements of \(\mathfrak{m}\) are of the recursive form
\[
v_j:(v_j \in \mathfrak{g}_{-1})\vee (v_j\propto [\, w_j\,,\, v_i,] \text{ for some } w_j\in \mathfrak{g}_{-1} \text{ and }
v_i,i<j, \text{ constructed earlier}).
\]
This recursion gives a recipe for extending \(A\) to linear maps \(\g_t\longrightarrow \g_{k+t}\) 
for \(t\in \left\{ -2,\dots,-\mu \right\} \), via the Leibniz super-rule
\begin{equation}
A([\, v\,,\, w\,])= [\, A(v)\,,\, w\,] + [\, v\,,\, A_{\overline 0}(w)] + (-1)^{|v|}[\, v\,,\, A_{\overline 1} (w)\,]
\label{leibniz}
\end{equation}
for pure parity \(v,w\in\m\). 
An application of $A$ to any relation $\sum r_s[v_s,w_s]=0$ via \eqref{leibniz} gives linear constraints
on the coefficients \(a_{ij}\). Let \(\ell\) be the dimension of the solution space of these constraints.
This means that \(A\) is a matrix with coefficients linearly expressed by a maximal set of independent parameters
\(\left\{ a_1,\dots,a_\ell \right\}\), which we can assume to be of pure parity. 
Decompose the matrix by them: $A=\sum_{i=1}^\ell a_iA_i$, where
 \[
A_i=A|_{\{a_j=\delta_{ij}\} }
 \]
induces a basis \(e_i\) of \(\mathfrak{g}_k\) (\(\delta_{ij}\) is the Kronecker delta), with $\ell _{\overline 0}$ even
elements and $\ell _{\overline 1}$ odd elements, \(\ell = \ell _{\overline 0} + \ell _{\overline 1}\).
The basis element \(e_i\) has a well defined parity, and the structure constants in 
\([\,e_i\,,\,v_j\,]=\sum_s c_{ij}^s v_s\) are given by \(c_{ij}^s = (A_i(v_j))^s\), where \(( v_1,\dots,v_{n})\) is the chosen basis of \(\m\). This gives the brackets of $\g_k$ and $\m$.

To compute the brackets of $\g_{\ge0}$ landing in $\g_k$ choose
\(i\in \left\{ 0,\dots, \left\lfloor k/2\right\rfloor \right\} \), \(u\in \g_{k-i}\), \( w\in\g_{i}\).
Then for each basis element \(v\in\g_{-1}\) we get
 \[
[\, [\, u\,,\, w\,]\,,\, v\,] = [\, u\,,\, [\, w\,,\, v\,]\,]-(-1)^{|u||w|}[\, w\,,\, [\, u\,,\, v\,]\,]\in \mathfrak{g}_{k-1}\,.
 \]
Decomposing the result by a basis of \(\g_{k-1}\) and arranging the coefficients in columns we get a matrix $A$
(columns correspond to a basis \(\{v\in\g_{-1}\}\)). Decomposing in turn $A$ by $A_i$,
corresponding to the basis $e_i$ of $\g_k$, we get the required structure constants. 

The dimension of the newly constructed \(\mathfrak{g}_k\) is \((\ell _{\overline 0}\,|\,\ell _{\overline 1})\).
If \(\ell=0\), the algorithm stops. Otherwise, the algorithm proceeds to the level \(k+1\). If the algorithm stops 
after a finite number of iterations, it produces a Lie superalgebra, which can be verified by the Jacobi identity. 
To avoid infinite loops in the computation a threshold amount of iterations is imposed in the algorithm.

\subsection{The proof}

We compute the Tanaka-Weisfeiler prolongation \(\op{pr}(\m^{\Xi}_{\cch})\) using a specially designed package for \textsc{Maple}. 
In most cases this computation already produces the Lie algebra $\g$, but for the dark-filled nodes in Figures \ref{geometries_diagram} and \ref{F4map}, 
a \(\mathfrak{g}_0\)-reduction is required.

This reduction specifies a Lie subalgebra \(\mathfrak{g}_0\) of the superalgebra \(\op{pr}_0(\m)=\mathfrak{der}_0(m)\) given by the algorithm. 
Once this reduction is given, the algorithm proceeds unchanged but starting from the level \(k=1\) instead of \(k=0\).
In all those dark-filled nodes, the result of this modified prolongation is $\g$.

Details of this computer-based calculation are in the \textsc{Maple} supplement to arXiv version of the current paper. 
Computer operations involve only symbolic manipulations and integer arithmetic, and hence are mathematically rigorous. 
This proves the main theorem, which we reformulate as follows.

 \begin{theorem}
\label{theorem G3}
Let $\g$ be $G(3)$ or $F(4)$ and $\fp^\Xi_\cch\subset\g$ be a choice of parabolic subalgebra with grading of depth \(\mu>0\).
Let \(\m^{\Xi}_{\cch}\) be the nilradical of the opposite parabolic. If \(\m^{\Xi}_{\cch}\) is none of the symbol algebras 
in Tables \ref{reduction G3} or \ref{reduction F4}, then the Tanaka-Weisfeiler prolongation of \(\m^{\Xi}_{\cch}\) is $\g$:
 \[
\op{pr}(\m^{\Xi}_{\cch}) =\g^{\Xi,\cch}_{-\mu}\oplus\dots\oplus\g^{\Xi,\cch}_{-1}\oplus\g^{\Xi,\cch}_0\oplus \g^{\Xi,\cch}_{1}\dots\oplus\g^{\Xi,\cch}_{\mu}=\g
 \]
If \(\m^{\Xi}_{\cch}\) is one of the symbol algebras in Table \ref{reduction G3} or \ref{reduction F4}, then the Tanaka-Weisfeiler prolongation with \(\g^{\Xi,\cch}_{0}\)-reduction is $\g$:
 \[
\op{pr}(\m^{\Xi}_{\cch},\g^{\Xi,\cch}_0) =\g^{\Xi,\cch}_{-\mu}\oplus\dots\oplus\g^{\Xi,\cch}_{-1}\oplus\g^{\Xi,\cch}_0\oplus\g^{\Xi,\cch}_{1}\dots\oplus \g^{\Xi,\cch}_{\mu}=\g.
 \]
 \end{theorem}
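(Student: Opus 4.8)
The plan is to reduce the statement to a vanishing theorem in Lie superalgebra cohomology and then to verify that vanishing. Recall from Section \ref{S2} that for a $\Z$-graded Lie superalgebra $\g$ with $\m=\g_{<0}$ one has $\op{pr}(\m)=\g$ precisely when $H^1(\m,\g)_{\ge0}=0$, and $\op{pr}(\m,\g_0)=\g$ precisely when $H^1(\m,\g)_+=0$. Since $\g$ itself is a transitive graded extension of $\m$ (no nonzero element of $\g_{\ge0}$ centralizes $\g_{-1}$, by simplicity of $\g$ together with the parabolic structure of the grading), maximality of the prolongation always gives a graded embedding $\g\hookrightarrow\op{pr}(\m)$. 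The content of the theorem is therefore the reverse inclusion: the absence of any prolongation in positive degrees beyond what is already present in $\g$. Thus for each of the $19$ parabolics of $G(3)$ and $55$ of $F(4)$ it suffices to compute the relevant graded piece of $H^1(\m^\Xi_\cch,\g)$.

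The cleanest route would be a super-analogue of Kostant's theorem. In the classical setting Kostant's formula computes $H^*(\m,\g)$ from the Weyl group, and Yamaguchi's prolongation theorem then yields $\op{pr}(\g_{<0})=\g$ for every simple graded Lie algebra except the $|1|$-graded (abelian $\m$) gradings and the contact gradings, where the prolongation is infinite. One expects the $G(3)$/$F(4)$ picture to mirror this exactly, with the finite set of exceptions being precisely the irreducible and the odd and mixed contact gradings recorded in Tables \ref{reduction G3}--\ref{reduction F4}. The main obstacle is that the Bott--Borel--Weil/Kostant machinery does not survive intact in the super setting (cf.\ the discussion of \cite{Co} in the introduction), so one cannot simply read $H^1$ off the Weyl groupoid combinatorics; some direct input is unavoidable.

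Accordingly, I would proceed case by case, first using the odd-reflection identifications of Section \ref{S2} to collapse many $\m^\Xi_\cch$ across different $\Xi$ into a single computation (already visible in the equalities of symbol algebras in Tables \ref{reduction G3}--\ref{reduction F4}). For each surviving $\Xi,\cch$ I would encode $\m^\Xi_\cch$ explicitly from the negative-root data of Tables \ref{negative roots G3} and \ref{negative roots F4}, fixing the nonzero structure constants $c^k_{ij}$ by rescaling root vectors and resolving the remainder through the Jacobi identity, as in the simplified Serre relations of Section \ref{S5}. I would then either compute $H^1(\m^\Xi_\cch,\g)_{\ge0}$ directly as a finite-dimensional linear-algebra problem on the degree-graded Chevalley--Eilenberg complex, or run the iterative Tanaka--Weisfeiler construction, building $\g_0,\g_1,\g_2,\dots$ one degree at a time and checking both that the process terminates and that the accumulated dimensions match $\dim\g=(17|14)$, respectively $(24|16)$. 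In the majority of cases the first nontrivial prolongation degree already reproduces $\g_{\ge0}$ and the next degree vanishes, which certifies $\op{pr}(\m)=\g$.

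Finally I would isolate the exceptional gradings, where the prolongation is infinite and the bare symbol does not suffice. For the $|1|$-graded irreducible case $\op{pr}(\m)=\mathfrak{vect}$, and for the odd and mixed contact gradings $\op{pr}(\m)$ is the full contact superalgebra $\mathfrak{k}$; here I would compute $\op{pr}_0(\m)=\op{der}_0(\m)$ (the conformal, general-linear, or conformal orthosymplectic superalgebra listed in the tables), select the reductive subalgebra $\g_0$ specified by the structure-group reduction, and prolong $\m\oplus\g_0$ instead. The hard part throughout is the failure of the super-Kostant shortcut: it forces one to verify vanishing of $H^1$ (equivalently termination of the prolongation at the correct degree) by explicit finite computation in each case, and, in the exceptional cases, to identify $\op{der}_0(\m)$ precisely so that the correct $\g_0$-reduction can be imposed.
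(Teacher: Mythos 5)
Your proposal is correct and follows essentially the same route as the paper: the authors likewise note the equivalence with vanishing of $H^1(\m,\g)_{\ge0}$ (resp.\ $H^1(\m,\g)_+$), observe that the super-Kostant/BBW shortcut is unavailable, encode each $\m^{\Xi}_{\cch}$ from the negative-root tables with structure constants fixed by rescaling and the Jacobi identity, and run the degree-by-degree Tanaka--Weisfeiler prolongation algorithm (implemented in \textsc{Maple}, using symbolic/integer arithmetic only), imposing the $\g_0$-reduction precisely in the irreducible and contact cases. The only cosmetic difference is that you also float the option of computing $H^1$ directly as a linear-algebra problem, whereas the paper commits to the iterative prolongation.
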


This gives both local and global symmetry of supergeometry \(M^{\Xi}_{\cch}\), in a spirit of \cite{O,KST2}.
Moreover our computation implies the following claim for general vector superdistributions with fixed Carnot algebra.

 \begin{cor}\label{cor2}
Let $\D$ be a strongly regular superdistribution on a supermanifold $M$.
Suppose that its symbol is one of $\m^\Xi_\cch$ computed for $\g$ that is $G(3)$ or $F(4)$,
but the geometry is not contact or irreducible.
Then dimension of the symmetry superalgebra of $\D$ does not exceed $\dim\g$, 
that is $(17|14)$ for $G(3)$ and $(24|16)$ for $F(4)$, and symmetry dimension equals to it only if $(M,\D)$ is locally isomorphic 
to our model space $(M^\Xi_\cch,\D^\Xi_\cch)$.
 \end{cor}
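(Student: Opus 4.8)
The plan is to deduce the statement from two general facts of super-Tanaka theory, fed by the equality $\op{pr}(\m^\Xi_\cch)=\g$ established in Theorem \ref{theorem G3}. Note first that the hypothesis that the geometry be neither contact nor irreducible is exactly what removes the symbol algebras of Tables \ref{reduction G3} and \ref{reduction F4}; for every remaining $\m^\Xi_\cch$ the prolongation is finite and equals $\g$. The two facts are: (i) for a strongly regular distribution with symbol $\m$ the symmetry superalgebra is majorized in dimension by $\op{pr}(\m)$; and (ii) this majorant is attained only when the geometry is flat, i.e.\ locally isomorphic to the standard model.

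For (i) I would set $\mathfrak{s}=\mathfrak{sym}(M,\D)$ and fix $x\in M_o$, filtering $\mathfrak{s}$ by the order of vanishing of its vector fields at $x$. Strong regularity makes the associated graded $\op{gr}_x\mathfrak{s}$ a transitive $\Z$-graded Lie superalgebra whose negative part is $\m$; the maximality in Definition \ref{def prolongation} then produces a graded embedding $\op{gr}_x\mathfrak{s}\hookrightarrow\op{pr}(\m)$. This is precisely the majorization of \cite{KST2} (the super-analogue of \cite{Ta,O}), and it yields $\dim\mathfrak{s}=\dim\op{gr}_x\mathfrak{s}\le\dim\op{pr}(\m)=\dim\g$, which is $(17|14)$ for $G(3)$ and $(24|16)$ for $F(4)$ by Theorem \ref{theorem G3}.

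For (ii) I would assume $\dim\mathfrak{s}=\dim\g$, so that the embedding above becomes an isomorphism $\op{gr}_x\mathfrak{s}\cong\g$ at each point; in particular $\mathfrak{s}$ is locally transitive and its isotropy fills out $\fp^\Xi_\cch$. By super-Tanaka theory $\D$ carries a canonical Cartan connection of type $(\g,\fp^\Xi_\cch)$, and attainment of the dimension bound is equivalent to the vanishing of its curvature, hence to flatness. A flat regular supergeometry of this type is locally isomorphic to the homogeneous model $G/P^\Xi_\cch$, equivalently to $(\exp(\m^\Xi_\cch),\D)=(M^\Xi_\cch,\D^\Xi_\cch)$, which is the asserted local equivalence.

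The main obstacle is (ii), the rigidity. The bound (i) is the majorization already quoted from \cite{KST2}, made effective here only because Theorem \ref{theorem G3} renders $\op{pr}(\m)$ finite. Upgrading ``maximal symmetry'' to ``locally flat'' in the super setting requires the super-analogue of Tanaka's normalisation: one must check that the purely algebraic prolongation of Definition \ref{def prolongation} underlies a canonical Cartan connection whose curvature, valued in the positive-degree part of $H^2(\m,\g)$, is a complete local obstruction, and that the $\Z_2$-parity bookkeeping in this curvature introduces no invariants beyond the classical ones. Granting this structural input, maximal symmetry forces the curvature to vanish and the geometry to coincide locally with the model, completing the argument.
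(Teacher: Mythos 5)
Your proposal is correct and follows essentially the same route as the paper: the corollary is obtained by combining the computed equality $\op{pr}(\m^\Xi_\cch)=\g$ of Theorem~\ref{theorem G3} with the general majorization-and-rigidity theorem for strongly regular superdistributions from \cite{KST2}, which is exactly the structural input you identify (the super-Tanaka bound via the vanishing-order filtration and the flatness characterization of the equality case). The paper does not re-prove that input but simply cites \cite{KST2}, so your elaboration of steps (i) and (ii) is an unpacking of the same argument rather than a different one.
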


\section{Integral curves and surfaces of the superdistributions}\label{S6}


Consider $(1|1)$ integral curves of the invariant distribution $\D$ on one of geometries $M^\Xi_\cch$,
i.e.\ such morphisms $\varphi:\C^{1|1}\to M^\Xi_\cch$ that for any $\omega\in\op{Ann}(\D)$
we have: $\varphi^*\omega=0$. (In this formulation it is clear that we allow degenerations 
to $(1|0)$ and $(0|1)$ integral curves as well as to points.) In particular, such curves arise from
the flow of a mixed vector field $X=X_{\bar0}+X_{\bar1}$ with $[X,X]=0$, defining an Abelian action of $\C^{1|1}$,
cf.\ \cite{GW,MSV}.
Of course, for purely odd distributions $\D$ we can restrict to $(0|1)$ integral curves.

 \begin{theorem}\label{th3}
In 15 of 19 cases of generalized flag varieties for $G(3)$ and in all 55 cases for $F(4)$ the set of
(or the equation for) integral curves 
of the corresponding superdistribution $\D$ on $M^\Xi_\cch$ allows to recover the reduced distribution 
$D\subset TM|_{M_o}$ but not necessary $\D$.
In 4 special cases $M_1^{\rm II}=M_3^{\rm III}=M_3^{\rm IV}$, $M_{13}^{\rm III}=M_{13}^{\rm IV}$,
$M_{23}^{\rm IV}$ and $M_{123}^{\rm IV}$ 
even the reduced distribution $D$ cannot be recovered from $(1|1)$ integral curves.
 \end{theorem}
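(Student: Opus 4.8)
The plan is to reduce everything to a symbol-level computation on the Carnot superalgebra $\m=\m^\Xi_\cch$ of each geometry, recorded in Tables \ref{negative roots G3} and \ref{negative roots F4}. The key observation is that a $(1|1)$ integral curve $\varphi$ records at a reduced point $x\in M_o$ only the two velocities $\op{ev}(\varphi_*\p_t)\in D_{\overline0}|_x$ (even) and $\op{ev}(\varphi_*\p_\theta)\in D_{\overline1}|_x$ (odd). Even velocities are unconstrained: for any $u\in D_{\overline0}|_x$ the flow of the corresponding left-invariant even field is a $(1|0)$ integral curve, so $D_{\overline0}$ is always recovered. The odd velocities are constrained by integrability. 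Writing the generating Abelian field as $X=X_{\overline0}+X_{\overline1}$, one has $[X,X]=[X_{\overline1},X_{\overline1}]=2X_{\overline1}^2$, so $[X,X]=0$ is exactly $X_{\overline1}^2=0$; since $\p_\theta^2=0$ on $\C^{1|1}$ this is forced, and passing to symbols gives $\tfrac12[v,v]=0$ for $v=\op{ev}(X_{\overline1})$, i.e.\ the Levi bracket $[v,v]\in\g_{-2}$ vanishes. (For a general integral curve the same follows from $\varphi^*d\omega=d\varphi^*\omega=0$ evaluated on $(\p_\theta,\p_\theta)$ against the grade $-2$ forms $\omega\in\op{Ann}(\D)$.) Conversely every isotropic $v$ is realised: if $[v,v]=0$ in $\m$ then the left-invariant odd field $\tilde v$ satisfies $\tilde v^2=\tfrac12\widetilde{[v,v]}=0$, and its flow is a $(0|1)$ integral curve with reduced odd velocity $v$.

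This identifies the recoverable odd directions with $\op{span}(C)$, where $C=\{v\in(\g_{-1})_{\overline1}:[v,v]=0\in\g_{-2}\}$ is the isotropic cone of the quadratic Levi map $q(v)=[v,v]$. Thus the dichotomy I would establish is: the integral curves recover the reduced distribution $D=D_{\overline0}\oplus D_{\overline1}$ if and only if $\op{span}(C)=(\g_{-1})_{\overline1}$, and fail to recover $D$ exactly when $\op{span}(C)\subsetneq(\g_{-1})_{\overline1}$. The $F(4)$ cases are then immediate and uniform: for every odd root $\beta$ of $F(4)$ the double $2\beta$ is never a root (odd roots being $(\pm\delta\pm\varepsilon_1\pm\varepsilon_2\pm\varepsilon_3)/2$), so $[e_\beta,e_\beta]=0$ and the odd root vectors already lie in $C$ and span $(\g_{-1})_{\overline1}$; hence $D$ is recovered in all $55$ cases.

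For $G(3)$ the only odd root with $2\beta$ again a root is $\beta=-\delta$ (as $2\delta\in\Delta_{\overline0}$), so $[e_\beta,e_\beta]=0$ for every odd $\beta\neq\pm\delta$ and the unique diagonal contribution to $q$ is $[e_{-\delta},e_{-\delta}]\in\C\,e_{-2\delta}$. Therefore the $e_{-2\delta}$-coordinate of $q(v)$ equals $\kappa\,b^2+(\text{cross terms})$, where $b$ is the $e_{-\delta}$-coefficient of $v$, $\kappa\ne0$, and the cross terms arise precisely from pairs of grade $-1$ odd roots $\rho,\rho'$ with $\rho+\rho'=-2\delta$ (no such pair can involve $e_{-\delta}$ itself). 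Running through the grading of each of the $19$ parabolics from Table \ref{negative roots G3}, three patterns occur: if $e_{-\delta}\notin(\g_{-1})_{\overline1}$, then $q$ vanishes on a spanning set of odd root vectors and $D$ is recovered; if $e_{-\delta}\in(\g_{-1})_{\overline1}$ but a cancelling pair exists (e.g.\ $M^{\rm II}_3$, with grade $-1$ odd roots $-\delta,\,-\delta\pm\varepsilon_1$ and $(-\delta+\varepsilon_1)+(-\delta-\varepsilon_1)=-2\delta$), then the $e_{-2\delta}$-coordinate is cancellable with $b\ne0$ and the zero locus still spans, so $D$ is recovered; and $D$ fails to be recovered exactly when $e_{-\delta}\in(\g_{-1})_{\overline1}$ while no cancelling pair exists, forcing $b=0$ on $C$ and pinning $C$ into a proper subspace. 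This last situation holds in precisely the four geometries $M^{\rm II}_1=M^{\rm III}_3=M^{\rm IV}_3$, $M^{\rm III}_{13}=M^{\rm IV}_{13}$, $M^{\rm IV}_{23}$ and $M^{\rm IV}_{123}$, the identifications coming from the odd-reflection groupoid of Section \ref{S2}; for instance $M^{\rm II}_1$ has $(\g_{-1})_{\overline1}=\langle e_{-\alpha_1},e_{-\delta}\rangle$ with $[e_{-\delta},e_{-\delta}]\ne0$ and no partner, so $C=\langle e_{-\alpha_1}\rangle$ is proper.

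Finally, for the $15+55$ good cases I would show that although $D$ is recovered, $\D$ is not. The whole family of $(1|1)$ integral curves is determined by the reduced data $D_{\overline0}$ and the cone $C$, both of which are read off from $D=\op{ev}(\D)$ and its reduced Levi bracket alone; and since a $(1|1)$ probe annihilates the square of the odd ideal ($\theta^2=0$), one may modify $\D$ by sections of odd degree $\ge2$ without changing $D$ or any pullback $\varphi^*\op{Ann}(\D)$, producing distinct superdistributions with identical integral-curve families. As $D$ does not determine $\D$, the integral curves cannot reconstruct $\D$. I expect the main obstacle to be the necessity half of the key lemma, namely the rigorous passage from $X_{\overline1}^2=0$ (resp.\ $\varphi^*d\omega=0$) to the symbol identity $[v,v]=0$, together with checking in each of the four exceptional gradings that the absence of a pair $\rho+\rho'=-2\delta$ genuinely confines $C$ to a proper subspace; both are finite verifications but must be carried out carefully from the root data of Table \ref{negative roots G3}.
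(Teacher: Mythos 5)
Your argument follows the paper's proof essentially step for step: tangents of pure-parity integral curves are identified with elements of $\g_{-1}$, the even part is unconstrained while the odd part must be null for the vector-valued Levi quadric, $F(4)$ is handled by the absence of doubled odd roots, the $G(3)$ dichotomy reduces to whether $e_{-\delta}$ sits in degree $-1$ and whether the $e_{-2\delta}$-component of the quadric admits a compensating cross term, and non-recovery of $\D$ comes from $\theta^2=0$ killing coefficients that are quadratic in the odd coordinates. Your uniform ``$-\delta$ present plus cancelling pair $\{-\delta\pm\varepsilon_i\}$'' criterion is a clean repackaging of the paper's diagram-by-diagram quadric analysis and yields the same four exceptional parabolics; the only slip is that $[X,X]=2[X_{\bar0},X_{\bar1}]+[X_{\bar1},X_{\bar1}]$, so integrality also forces $[v_{\bar0},v_{\bar1}]=0$ --- harmless here, since both you and the paper ultimately span $D$ using pure-parity curves.
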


This statement is important for understanding of encoding superalgebras via differential equations, 
so we provide the proof in several steps, and also discuss related topics.

\subsection{Recovering $D$}
A tangent vector to $(1|1)$ integral curve $\varphi$ at 0 is an element of the pullback tangent bundle  
$\varphi^*\mathcal{T}M$ at 0. For point $x=\varphi(0)\in M_o$ it can be identified with an element in 
$D_x\subset T_xM$, and furthermore can be identified with an element in $\g_{-1}=\g^{-1}\op{mod}\fp$.
This element is, in fact, a pair $(v_{\bar0},v_{\bar1})$ of even and odd vectors or a mixed parity vector
$v=v_{\bar0}+v_{\bar1}$. Because the curve is integral, we have $[v,v]=0$. 

Due to independence of the components
this means $[v_{\bar0},v_{\bar1}]=0$ and $[v_{\bar1},v_{\bar1}]=0$ in $\m\subset\g$
(and trivially $[v_{\bar0},v_{\bar0}]=0$). 
Since degenerations are allowed, we can restrict to pure parity cases when $v_{\bar0}=0$ or $v_{\bar1}=0$.

In particular, since even $(1|0)$ curves have no involutivity restrictions, their tangents span $(\g_{-1})_{\bar0}$.
However odd $(0|1)$ curves have null tangents with respect to the vector-valued quadric $[,]|_{\m_{\bar1}}$,
in other words are subject to the constraints $[v_{\bar1},v_{\bar1}]=0$. 

We claim that in all but 4 special cases the odd tangent vectors span $(\g_{-1})_{\bar1}$. 
To see this for each parabolic $\fp\subset\g$ we consider Dynkin diagrams in turn. 

We start with $G(3)$ and refer to Table \ref{negative roots G3} of negative roots $\alpha$, 
giving a basis $e_\alpha$ of $\m=\g_-$. Since $[e_\alpha,e_\beta]=k_{\alpha,\beta}e_{\alpha+\beta}$
for $k_{\alpha,\beta}\neq0$ iff $\alpha+\beta$ is a root, we see that an odd root vector $e_\alpha$ is null if
$2\alpha$ is not (an even) root. This reads off the diagram graphically 
by watching for columns in even part with even number of circles everywhere, 
and then deciding which parabolics correspond to this root vector being in $\g_{-2}$. 

For the first diagram $\Xi=\text{I}$ the only even double root is $-(2\alpha_1+4\alpha_2+2\alpha_3)$.
The corresponding root vector belongs to $\g_{-2}$ only for parabolics $\fp_1$ and $\fp_3$.
In the first case we get odd contact grading, where $\g_{-1}$ is odd with the bracket being a nondegenerate 
conformal quadric in 7D (this can be also read off the table). In this case the null cone is nondegenerate and spans 
the space $(\g_{-1})_{\bar1}$. In the second case the bracket $\Lambda^2(\g_{-1})_{\bar1}\to(\g_{-2})_{\bar0}$
has 1D range and is a nondegenerate conformal quadric in 3D. Thus again the null cone is nondegenerate and 
spans the space $(\g_{-1})_{\bar1}$.

For the second diagram $\Xi=\text{II}$ the only even double root is $-(2\alpha_1+4\alpha_2+2\alpha_3)$.
The corresponding root vector belongs to $\g_{-2}$ only for parabolics $\fp_1$ and $\fp_3$. The first case is special, 
where the second corresponds to a case considered above because $\fp_3^{\rm II}=\fp_3^{\rm I}$.

For the third diagram $\Xi=\text{III}$ the only even double root is $-(2\alpha_2+2\alpha_3)$.
The corresponding root vector belongs to $\g_{-2}$ for parabolics $\fp_2$, $\fp_{12}$, $\fp_3$ and $\fp_{13}$.
For $\fp_2$  the bracket $\Lambda^2(\g_{-1})_{\bar1}\to(\g_{-2})_{\bar0}$
has 1D range and is a nondegenerate conformal quadric in 3D. Thus the null cone is nondegenerate and 
spans the space $(\g_{-1})_{\bar1}$. 
For $\fp_{12}$ the odd part of the distribution splits 
$(\g_{-1})_{\bar1}=\langle e_{-\alpha_1},e_{-(\alpha_1+\alpha_3)}\rangle\oplus
\langle e_{-\alpha_2},e_{-(\alpha_2+\alpha_3)},e_{-(\alpha_2+2\alpha_3)}\rangle$ 
(into $(\g_0)_{\bar0}$ irreps) and the first summand is null, 
while on the second the bracket has 1D range $\langle e_{-(2\alpha_2+2\alpha_3)}\rangle$
and is a nondegenerate conformal quadric in 3D. Thus again null vectors span the space $(\g_{-1})_{\bar1}$. 
For $\fp_3$ the situation is repeated due to $\fp_3^{\rm III}=\fp_3^{\rm I}$.
For $\fp_{13}$ we have another special case.

For the final diagram $\Xi=\text{IV}$ the only even double root is $-2\alpha_3$.
The corresponding root vector belongs to $\g_{-2}$ only for parabolics $\fp_3$, $\fp_{13}$, $\fp_{23}$ and $\fp_{123}$.
The first two were already considered above. The last two are precisely the other 2 special cases.

Consideration of $F(4)$ is surprisingly simple: it follows from Table \ref{negative roots F4} that,
for any choice of Dynkin diagram, there are no even double roots. 
Thus all odd root vectors in $\g_{-1}$ are null and they span the space $(\g_{-1})_{\bar1}$ for any
choice of parabolic subalgebra $\fp_\cch^\Xi$.

\subsection{Special cases for integral curves}

Let us consider four exceptions for $G(3)$.

For $\fp_1^{\rm II}$ Table \ref{negative roots G3} yields: 
$(\g_{-1})_{\bar1}=\langle e_{-\alpha_1},e_{-(\alpha_1+2\alpha_2+\alpha_3)}\rangle$ 
and $(\g_{-2})_{\bar0}=\langle e_{-(2\alpha_1+2\alpha_2+\alpha_3)},e_{-(2\alpha_1+4\alpha_2+2\alpha_3)}\rangle$.
Thus tangents to $(1|1)$-integral curves span 
$\langle e_{-(\alpha_1+\alpha_2)},e_{-(\alpha_1+\alpha_2+\alpha_3)}|e_{-\alpha_1}\rangle\subsetneq\g_{-1}$.

For $\fp_{13}^{\rm III}$ we have: 
$(\g_{-1})_{\bar1}=\langle e_{-\alpha_1},e_{-(\alpha_2+\alpha_3)}\rangle$ 
and $(\g_{-2})_{\bar0}=\langle e_{-(\alpha_1+\alpha_2+\alpha_3)},e_{-(2\alpha_2+2\alpha_3)}\rangle$.
Thus tangents to $(1|1)$-integral curves span 
$\langle e_{-\alpha_3},e_{-(\alpha_1+\alpha_2)}|e_{-\alpha_1}\rangle\subsetneq\g_{-1}$.

For $\fp_{23}^{\rm IV}$ we have consistent grading: 
$\g_{-1}=\langle e_{-\alpha_2},e_{-\alpha_3},e_{-(\alpha_1+\alpha_2)}\rangle$ ,
 $\g_{-2}=\langle e_{-(\alpha_2+\alpha_3)},e_{-2\alpha_3},e_{-(\alpha_1+\alpha_2+\alpha_3)}\rangle$.
Thus tangents to $(1|1)$-integral curves span 
$\langle e_{-\alpha_2},e_{-(\alpha_1+\alpha_2)}\rangle\subsetneq\g_{-1}$.

For $\fp_{123}^{\rm IV}$ we have: 
$\g_{-1}=\langle e_{-\alpha_1}|e_{-\alpha_2},e_{-\alpha_3}\rangle$ and 
$\g_{-2}=\langle e_{-(\alpha_2+\alpha_3)},e_{-2\alpha_3}|e_{-(\alpha_1+\alpha_2)}\rangle$.
Thus tangents to $(1|1)$-integral curves span 
$\langle e_{-\alpha_1}|e_{-\alpha_2}\rangle\subsetneq\g_{-1}$.

 \begin{remark}
There are some partial fixes for these cases. For instance, in the case of $\fp_{23}^{\rm IV}$ tangents to 
the integral $(1|1)$ curves of the distribution $\D^2$ span $D^2\subset TM|_{M_o}$, which corresponds to 
$\g^{-2}\op{mod}\fp=\g_{-2}\oplus\g_{-1}$. In this space, with the corresponding brackets,
the centralizer of $\langle e_{-\alpha_2},e_{-(\alpha_1+\alpha_2)}\rangle$ is $\g_{-1}$.
 \end{remark}

\subsection{Non-recovering $\D$}

It is important however that integral curves can recover only reduced bundle and not the original distribution $\D$.
Indeed, let $\C^{1|1}$ have coordinates $(x|\theta)$. Then pullbacks of even 1-forms are $a(x)\,dx+d(x)\theta\,d\theta$,
while pullbacks of odd 1-forms are $b(x)\theta\,dx+c(x)\,d\theta$. If coefficients of 1-forms from $\op{Ann}(\D)$
contain nonlinearities in odd coordinates $\xi_i$ on $M$, those will pullback to zero. 
This loss of information cannot be recovered, and hence we cannot recover $\D$ in general.

There are however cases, when generators of $\D$ can have only linear coefficients in odd coordinates on $M$.
This happens when the distribution has depth 2 (and indeed also in the irreducible case $\D=\mathcal{T}M$ of depth 1). 
These cases are: odd contact structure on $M_1^{\rm I}$, mixed contact structure 
on $M_1^{\rm III}$ and depth 2 distribution with growth $(4|3,2|2)$ on $M_3^{\rm I}$ for $G(3)$;
odd contact structure on $\m_1^{\rm I}$, mixed contact structure on $M_3^{\rm III}$, 
irreducible geometry of dimension $(6|4)$ on $M_4^{\rm I}$
and depth 2 distributions with growth $(6|4,2|2)$ on $M_3^{\rm I}$,
growth $(4|4,3|1)$ on $M_1^{\rm II}$,
growth $(0|8,5|0)$ on $M_2^{\rm V}$ for $F(4)$.

In other cases, one cannot avoid quadratic coefficients in $\xi_i$ and hence requires higher dimensional
integral submanifolds to encode the distribution $\D$. For instance, depth 3 distribution with growth 
$(2|4,1|2,2|0)$ on $M_2^{\rm IV}$ considered in \cite{KST} has quadratic coefficients in odd coordinates.
Thus at least 2 odd coordinates are required to encode the geometry. There can be no higher dimensional
integral submanifolds than ones of dimension $(1|2)$, but those are sufficient and they do encode the geometry
of the distribution through the super version of the Hilbert-Cartan equation constructed in \cite{KST}; the solutions 
of that equation are precisely these integral surfaces. This implies 
that the symmetry of the super Hilbert-Cartan is $G(3)$.

\subsection{Differential equations}

One expects to be able to construct, in a similar manner, other realizations of $G(3)$ and $F(4)$ as symmetries of
differential equations, however this will not be possible for every realization $M^\Xi_\cch$. For instance,
integral curves and hence surfaces are insufficient to recover $D$ in 4 special cases from Theorem \ref{th3}, 
so these Lie superalgebras do not realize as supersymmetries on the corresponding manifolds.

This is in contrast with the classical case, when integral curves recover the distribution, and hence the
underdetermined ODE on such curves (so-called Monge equation) has the prescribed symmetry.
For instance, in this way, the Hilbert-Cartan equation $z'(x)=y''(x)^2$ has infinitesimal symmetry algebra $G_2$.
(We refer for historical introduction of \cite{KST} with extended references, in addition to \cite{Ca,En}.)

Let us remark that in the classical case, every realization by vector fields implies realization as symmetries of 
some differential equation. To find such, one prolongs the action to the space of jets and computes differential invariants
of the action. However in supergeometry this may fail.

For the cases, when the geometry on $M=M^\Xi_\cch$ is encoded by a distribution $\D$ 
 (structure reduction will be considered in the next section) 
on generalized flag supervariety $G/P$
 (17 cases for $G(3)$ out of 19 and 52 cases for $F(4)$ out of 55),
the symmetry $\op{Lie}(G)=\g$ is encoded by the Lie equation on vector field $X$:
 $$
L_X\D=\D.
 $$ 
This differential equation can be considered as a submanifold $\mathcal{E}\subset J^1\pi$ 
in the space of 1-jets of sections of the tangent bundle $\pi:TM\to M$ as in the standard formal theory 
of differential equations\footnote{Jets, prolongations, differential equations 
and their symmetries have direct super-versions, see e.g.\ \cite{HR-MM}.}.

Its infinitesimal symmetry is a point transformation, i.e.\ a vector field on the total space of $TM=J^0\pi$,
whose prolongation to $J^1\pi$ is tangent to $\mathcal{E}$. 
We restrict to the subalgebra $\mathcal{S}_v$ of vertical symmetries, consisting of vector fields tangent to the fibers 
of $\pi:TM\to M$. Such fields are also called gauge symmetries. 

Since the Lie equation $\mathcal{E}$ is linear, its symmetries contain shifts by a solution.
If $\{X_i\}_{i=1}^n$, $n=\dim M$, is a frame on $M$ (for instance, left-invariant fields, corresponding
to a basis of $\m$), and $(x,y)$ are the corresponding coordinates on $TM$ where $T_xM\ni Y=\sum_{i=1}^n y^iX_i$,
then right-invariant fields $R_j=\sum_{i=1}^n a^i_j(x)X_i$ (corresponding to a basis in $\g$)
are symmetries of $\mathcal{E}$ and the infinitesimal shift by the solution $R_j$ is given by the vector field
$\rho_j=\sum_{i=1}^n a^i_j(x)\p_{y^i}$, $1\leq j\leq\dim\g$. 
In addition, the homothety $\sum_{i=1}^n y^i\p_{y^i}$ is in $\mathcal{S}_v$.

We suggest (based on a computation) that those give all infinitesimal symmetries, i.e.\
as a vector space $\mathcal{S}_v$ is isomorphic to $\g\oplus\C$, where $\g$ is $G(3)$ or $F(4)$ respectively, 
and $\C$ is generated by the homothety. 

Note however that $\g\subset\mathcal{S}_v$ is Abelian (shifts commute), 
and the symmetry algebra is its extension by derivation/homothety. 
The Lie superalgebra structure of $\g$ is recovered if we commute the shift symmetries $\rho_j$ with
the Liouville vector field $\sum y^iX^i$ (lifted arbitrarily to $TM$ and then projected down to $M$).

 \com{
 \begin{theorem}
Sketch the proof that gauge symmetry algebra is $\mathcal{S}_v=\g\oplus\C$.
 \end{theorem}

 \begin{proof}
Consider gauge symmetries $\sum_i u^i(x,y)\p_{y^i}$. If $u^i(x,y)=u^i(x)$ we get precisely
shifts by solutions, that is $\g$. The symmetries that are linear in $y$, namely $u^i(x,y)=\sum b^i_j(x)y^j$,
form a subalgebra of $\mathcal{S}_v$ that is represented on $\g\subset\mathcal{S}_v$.
At a point $x\in M$ we get linear transformations $B=\bigl(b^i_j(x)\bigr)$
of $T_xM$ that preserve the filtration associated with $\D$, or equivalently $P\subset G$.

Moreover, since symmetries corresponding to $\fp$ vanish at $x$, $B$ preserves them as well
(those symmetries with non-vanishing 1-jet are represented by linear endomorphisms, which should 
super-commute with $B$; those that have vanishing 1-jet but that are 2-jet determined correspond
to $(2,1)$-tensor preserved by $B$). This implies that $B$ cannot have different eigenvalues, and
that the unique eigenvalue should be constant, otherwise symmetries $R_j$ will be changed to non-symmetries.

The Jordan block structure of $B$ should be aligned to the filtration of $TM$, and it cannot preserve the 
space of symmetries $\g$ (remember that $R_j$ for $j>n$ vanish at $x$ and can be expressed through 
$R_1,\dots,R_n$ so it is only important to know how $B$ acts on that basis) as follows from consideration of
weighted jets (or from the fact that $B$ acts by permutation of a basis of $\g$ but filtration does depend on
a point, so the isotropy $\fp$ is changing and hence this nilpotence of $B-c\cdot\op{id}$ leads to contradiction).
Hence the only Jordan block can be of size $1\times1$ and we conclude that $B=c\cdot\op{id}$ is scalar.

Finally the higher order terms in $y$ cannot exist, because it is equivalent to triviality of the Tanaka prolongation
in $\mathcal{S}_v$ with $-1$ part being $\g$ and 0 part being homothety $\C$: 
If any quadratic (or higher order in $y$) symmetry commutes with $\g$ to take values in $\C$, it should be trivial. 
(Inhomogeneous in $y$ case can be reduced to the considered above.)
 \end{proof}
 }

\subsection{Integral submanifolds of higher dimensions}

We call an integral manifold $\varphi:L\to M$ nondegenerate/injective if $\varphi^*$ is surjective on $\mathcal{O}_L$.
Integrality condition means as before $\varphi^*\op{Ann}(\D)=0$. Nondegenerate integral manifolds are ordered
by inclusions according to dimension, but this is only a partial order.

For 
generalized flag varieties $G/P$ of maximal dimension integral curves are maximal integral manifolds:

 \begin{prop}\label{Pr1}
Let $G$ be $G(3)$ or $F(4)$ and $P$ a Borel subgroup. Then maximal integral non-even manifolds of the
left invariant distribution $\D$ on $G/P$, corresponding to $\g_{-1}$, can be at most $(1|1)$-integral curves.
 \end{prop}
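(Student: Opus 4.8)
The plan is to reduce the statement to a purely combinatorial computation on the root data of $G(3)$ and $F(4)$ in the Borel (maximal parabolic) case. Recall that for a Borel subgroup the grading has depth equal to the number of positive roots weighted by the marking, and $\g_{-1}$ is spanned by the simple root vectors $e_{-\alpha_i}$. A nondegenerate integral manifold $\varphi:L\to M$ of dimension $(p|q)$ determines, at a point $x=\varphi(0)$, a graded subspace $V=\varphi_*(T_0L)\subset\g_{-1}$ of dimension $(p|q)$ on which the Frobenius-type integrability condition forces $[V,V]=0$ in $\m$. The key observation is that for a Borel, $\g_{-1}$ is the span of the simple root vectors, so any two distinct simple roots $\alpha_i,\alpha_j$ with $\alpha_i+\alpha_j\in\Delta$ have $[e_{-\alpha_i},e_{-\alpha_j}]=k_{ij}e_{-(\alpha_i+\alpha_j)}\neq0$. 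Hence the condition $[V,V]=0$ becomes a condition on which simple root vectors can simultaneously lie in an involutive subspace.

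First I would set up the bracket constraints: if $V$ is spanned by (linear combinations of) simple root vectors, then $V$ abelian means that for every pair of simple roots $\alpha_i,\alpha_j$ appearing with nonzero coefficient in the chosen basis of $V$, either $\alpha_i+\alpha_j\notin\Delta$ or the relevant structure constant cancels; and for odd vectors $v$ one additionally needs $[v,v]=0$, i.e. $2\alpha_i$ is not an even root. Since for a Borel $\g_{-1}$ is just the simple roots, I would read off from the Dynkin diagrams / Tables \ref{negative roots G3} and \ref{negative roots F4} the adjacency: two simple roots bracket nontrivially precisely when their sum is a root, which is encoded by the edges of the Dynkin diagram. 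So an abelian graded subspace of $\g_{-1}$ corresponds to a subset of mutually non-adjacent simple roots (together with the null-cone condition $2\alpha_i\notin\Delta_{\bar0}$ for the odd ones).

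Next I would count even and odd dimensions. The even part $(\g_{-1})_{\bar0}$ consists of even simple root vectors, the odd part $(\g_{-1})_{\bar1}$ of odd ones. Since an even $(1|0)$ direction imposes no self-bracket constraint, $p$ can in principle reach the number of mutually non-adjacent even simple roots; but for a connected Dynkin diagram of $G(3)$ or $F(4)$ (rank $3$, resp. $4$, with the simple roots pairwise adjacent along the diagram), any two simple root vectors that are diagram-adjacent fail to commute, and the specific diagrams here are such that one cannot find two independent commuting directions of the right type. The heart of the argument is therefore to verify, diagram by diagram, that the maximal abelian graded subspace of $\g_{-1}=\langle e_{-\alpha_1},\dots,e_{-\alpha_r}\rangle$ (spanned by simple roots) has even dimension $\le1$ and odd dimension $\le1$, so that maximal nondegenerate non-even integral manifolds are at most $(1|1)$. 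I would do this by examining the six (resp. four) simple root systems in Tables \ref{roots} and \ref{roots F4}, checking for each pair of simple roots whether $\alpha_i+\alpha_j$ is a root and whether any odd $\alpha_i$ has $2\alpha_i\in\Delta_{\bar0}$.

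The main obstacle I anticipate is \emph{not} the pairwise bracket bookkeeping (that is routine, read off the diagrams), but rather two subtler points. First, I must make sure that $V$ is allowed to be spanned by \emph{linear combinations} of simple root vectors of the same grade, not just coordinate vectors; this could a priori create an abelian subspace of larger dimension if structure constants conspire to cancel, so I need the relevant brackets $\Lambda^2(\g_{-1})\to\g_{-2}$ to be nondegenerate enough (in the even-even and odd-odd blocks) to rule this out — here the word ``non-even'' in the statement is essential, since it forces genuine odd content. Second, integrality of a higher-dimensional $\varphi$ is an honest PDE condition, not just the infinitesimal $[V,V]=0$ at one point; but since the distribution is left-invariant and bracket-generating with the symbol $\m$ fixed, the obstruction to prolonging an infinitesimal integral element is precisely this first-order bracket condition on $\g_{-1}$, and I would invoke that the flag variety is homogeneous to reduce the global integrability to the pointwise graded statement. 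Once the pointwise bound $(1|1)$ is established and matched with the existence of genuine $(1|1)$ integral curves (already produced from mixed vector fields $X$ with $[X,X]=0$ as in Section \ref{S6}), the proposition follows.
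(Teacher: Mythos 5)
Your strategy coincides with the paper's: for a Borel subgroup $\g_{-1}$ is spanned by the simple root vectors, the tangent space of a nondegenerate integral manifold must be an abelian graded subspace of the symbol, abelian subspaces correspond to sets of pairwise non-adjacent simple roots (non-adjacent meaning $\alpha_i+\alpha_j\notin\Delta$), and isotropic odd simple roots (those with $2\alpha\in\Delta_{\bar 0}$) are excluded from odd tangent directions; the paper states exactly this as ``maximal integrals correspond to maximally disconnected subgraphs of the Dynkin diagram'' and reads the answer off Figures \ref{dynkinG3} and \ref{F4dynkin}. One step of your plan is, however, false as written: the maximal abelian graded subspace of $\g_{-1}$ does \emph{not} always have even dimension $\le 1$ --- for the $F(4)$ diagrams I, III, V, VI two even simple roots are non-adjacent and commute, which is precisely why the paper records maximal \emph{even} integrals of dimension $(2|0)$ for those Borels. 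The bound you actually need, and which your diagram-by-diagram check would deliver once corrected, is that a $3$- or $4$-node diagram of these types admits no independent set of three nodes and that no diagram contains two non-adjacent null odd simple roots; hence any abelian subspace with nonzero odd part has total dimension at most $2$ and odd dimension exactly $1$, giving the bound $(1|1)$ (degenerating to $(0|1)$ for $G(3)$ types III and IV, where the would-be partner of the null odd root is either adjacent or isotropic). Your two anticipated obstacles are benign: linear combinations cannot enlarge an abelian subspace because the brackets of distinct pairs of simple roots land in linearly independent one-dimensional root spaces $\g_{-(\alpha_i+\alpha_j)}$, so each component of $[V,V]$ vanishes separately; and since the proposition asserts only an upper bound, the necessary pointwise condition $[V,V]=0$ on tangent spaces of integral manifolds suffices, with no prolongation or global integrability argument required.
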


Actually, in all cases $P=P^\Xi_{123}$ for $G(3)$ or $P=P^\Xi_{1234}$ for $F(4)$ maximal non-even
integral manifolds are $(1|1)$ curves, except for $P^\Xi_{123}\subset G(3)$ with $\Xi={\rm III,IV}$,
where they are purely odd $(0|1)$ curves. Maximally even integral nondegenerate integral manifolds are $(1|0)$ curves 
except for $P^\Xi_{1234}\subset F(4)$ with $\Xi={\rm I,III,V,VI}$ where they are $(2|0)$ surfaces.
In fact, since $\D$ in this case is generated by root vectors for simple roots, it is not difficult to see that maximal
integrals corresponds to maximally disconnected subgraphs of the Dynkin diagram when it contains only
white and grey nodes (isotropic = black nodes to be treated separately), so Proposition \ref{Pr1}
follows at once from Figures \ref{dynkinG3} and \ref{F4dynkin}.

For other parabolics $P\subset G$ there exist higher dimensional integral submanifolds. 
For brevity of exposition we consider only the case of maximal parabolics,
where such dimensions are clearly biggest possible. Notice that for a concrete superdistribution $\D$ on $G/P$
there may be several maximal dimensions and none largest.
For instance, $\D$ on $G(3)/P^{\rm I}_2$ has $(1|1)$- and $(0|2)$-integral surfaces, but lacks $(1|2)$-integrals.
The argument refers to explicit form of brackets encoded in Tables \ref{negative roots G3} and \ref{negative roots F4}
and the result is as follows.

 \begin{prop}
Dimensions of maximal nondegenerate integral submanifolds of the
left invariant distribution $\D$ on $G/P$, corresponding to $\g_{-1}$, 
for maximal parabolics $P\subset G$, are the following.
 \end{prop}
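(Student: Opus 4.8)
The plan is to determine, for each of the ten choices of maximal parabolic $P=P^\Xi_i$ (with $\Xi$ ranging over the Dynkin diagrams and $i$ over the simple roots), the maximal dimensions of nondegenerate integral submanifolds of $\D=\g_{-1}$ purely from the combinatorial data in Tables \ref{negative roots G3} and \ref{negative roots F4}. The key observation is that integrality of $\varphi:L\to M$ means $\varphi^*\op{Ann}(\D)=0$, which at the tangent level selects a subspace $W\subset\g_{-1}$ on which the bracket $[\,,\,]:\Lambda^2 W\to\g_{-2}$ (restricted appropriately to the even-even, even-odd, and odd-odd blocks) vanishes; nondegeneracy forces $\varphi^*$ surjective, so $W$ must be an abelian (in the super sense) subspace of $\g_{-1}$ of the claimed even/odd dimensions. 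Thus the problem reduces to a linear-algebra computation: for each maximal parabolic, find the largest totally isotropic subspaces of $\g_{-1}$ with respect to the vector-valued quadratic/bilinear form given by the Carnot bracket landing in $\g_{-2}$.

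First I would fix, for each $(\Xi,i)$, the explicit basis $\{e_{-\alpha}\}$ of $\g_{-1}$ read off from the columns of grading $-1$ in the relevant table, together with the bracket relations $[e_{-\alpha},e_{-\beta}]=k_{\alpha,\beta}e_{-(\alpha+\beta)}$, nonzero precisely when $\alpha+\beta$ is a root of grading $-2$. Separating by parity, the even-even block records which pairs of even roots of grading $-1$ sum to a grading-$-2$ root; the odd-odd block is the vector-valued quadric $[v_{\bar1},v_{\bar1}]$ whose null cone was already analyzed in the proof of Theorem \ref{th3}; and the mixed block governs $[v_{\bar0},v_{\bar1}]$. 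The maximal even dimension is then the largest set of even simple-grading roots that are pairwise non-summable into $\g_{-2}$, and the maximal odd dimension is the dimension of a maximal isotropic subspace of the odd quadric; the joint $(p|q)$ dimensions require the chosen even and odd subspaces to additionally annihilate each other under the mixed bracket.

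The step I expect to be the main obstacle is the interaction captured by the mixed block: as the example $\D$ on $G(3)/P^{\rm I}_2$ already shows, one may have a maximal $(1|1)$ integral and a maximal $(0|2)$ integral yet no $(1|2)$ integral, because enlarging in both parities simultaneously is obstructed by $[v_{\bar0},v_{\bar1}]\neq0$. Consequently the answer is genuinely a set of incomparable maximal dimensions rather than a single largest one, and the bookkeeping must record \emph{all} maximal $(p|q)$ rather than maximizing $p+q$. I would organize this by, for each candidate even subspace, computing its centralizer inside $(\g_{-1})_{\bar1}$ under the mixed bracket and then intersecting with the odd null cone to find the largest compatible odd isotropic piece, and symmetrically starting from maximal odd isotropics. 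Since each $\g_{-1}$ is low-dimensional and the brackets are monomial in the root basis, these are finite explicit checks; the even-even and odd-odd pieces are immediate from the tables (and, for the odd quadric, already settled in the discussion following Theorem \ref{th3}), so that the only real work is the finite enumeration of mixed-compatible pairs for the ten maximal parabolics, whose outcomes are then tabulated in the statement.
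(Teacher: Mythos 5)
Your reduction to finding maximal super-abelian subspaces of $\g_{-1}$ (isotropic for the Levi bracket $\Lambda^2\g_{-1}\to\g_{-2}$, split into even--even, mixed and odd--odd blocks), followed by a finite root-by-root enumeration from the negative-root tables, is essentially the argument the paper itself invokes when it says the result ``refers to explicit form of brackets encoded in Tables \ref{negative roots G3} and \ref{negative roots F4}''; you also correctly flag the key bookkeeping subtlety that the maxima can be incomparable, as in the $G(3)/P^{\rm I}_2$ example. The only step left implicit (in the paper as well) is the converse realizability, namely that each maximal abelian subspace $W$ is actually attained, which follows in the homogeneous model by taking the integral submanifold $\exp(W)\subset\exp(\m)$.
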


\vspace{-0.7cm}     
\begin{center}
 \begin{table}[H]
  \begin{minipage}[t]{4cm}
  \[ \begin{array}{|l|c|c|} \hline
G(3)  &  \dim\g_{-1}  &  \mbox{Max dim int}
\vphantom{\dfrac{A}{.}} 
\\ \hline
\fp^{\text{I}}_1	 	&	 (0|7)		&	(0|3) \vphantom{\dfrac{A}{.}} \\[-3pt]
\fp^{\text{I}}_2		&	 (2|2)		&	(1|1), (0|2) \\
\fp^{\text{I}}_3		&	 (4|3)		&	(2|1) \\
\fp^{\text{II}}_1	&	 (2|2)		&	(1|1) \\
\fp^{\text{III}}_1	&	 (4|4)		&	(2|2) \\
\fp^{\text{IV}}_2	&	 (2|4)		&	(1|2) \\
\hline
 \end{array} \]
 \end{minipage}
\hspace{3cm}
 \begin{minipage}[t]{4cm}
  \[  \begin{array}{|l|c|c|} \hline
F(4)  &  \dim\g_{-1}  &  \mbox{Max dim int}
\vphantom{\dfrac{A}{.}} 
\\ \hline
\fp^{\text{I}}_1	 	&	 (0|8)		&	(0|4) \vphantom{\dfrac{A}{.}} \\[-3pt]
\fp^{\text{I}}_2		&	 (3|3)		&	(1|1), (0|3) \\
\fp^{\text{I}}_3		&	 (6|4)		&	(3|2) \\
\fp^{\text{I}}_4		&	 (6|4)		&	(6|4) \\
\fp^{\text{II}}_1	&	 (4|4)		&	(1|3), (2|2) \\
\fp^{\text{III}}_3	&	 (6|4)		&	(3|2) \\
\fp^{\text{IV}}_3	&	 (4|4)		&	(2|2) \\
\fp^{\text{V}}_2	&	 (0|8)		&	(0|4) \\
\fp^{\text{VI}}_2	&	 (0|6)		&	(0|3) \\
\hline
  \end{array} \]
 \end{minipage}
\vspace{0.3cm}
\caption{Maximal integrals for distributions on $G(3)/P_\text{max}$ and $F(4)/P_\text{max}$.}
 \label{Max-int-G3F4}
   \end{table}
\end{center}

\vspace{-1cm}

Note that all maximal parabolics are covered in these tables due to odd reflections isomorphisms shown on
Figures \ref{F4map} and \ref{geometries_diagram}.

\section{Geometries with structure reduction}\label{S7}

In \cite{KST} $G(3)$ was realized as a symmety of the supergeometry $M_2^{\rm IV}$ (super Hilbert-Cartan)
and of the mixed $G(3)$ contact structure $M_1^{\rm IV}$. 
In \cite{ST} $F(4)$ was realized as a symmety of the odd $F(4)$ contact structure $M_1^{\rm I}$ 
and of the mixed $F(4)$ contact structure $M_3^{\rm III}$. 
Here we consider the remaining two special cases, namely $M_1^{\rm I}$ for $G(3)$ and $M_4^{\rm I}$ for $F(4)$,
and explicitly realize those Lie superalgebras by symmetries.

\subsection{Odd $G(3)$ contact structure}\label{SG371}

Consider a supermanifold $M=\C^{1|7}(x|\xi_1,\dots,\xi_7)$ with contact structure given
via contact form 
 $$
\omega=dx-\xi_1\,d\xi_4-\xi_2\,d\xi_5-\xi_3\,d\xi_6-\xi_7\,d\xi_7
 $$ 
with conformal symplectic form (classically a conformal metric) on $\D=\op{Ker}(\omega)$:
 $$
g=-d\omega|_{\D}=d\xi_1\wedge d\xi_4+d\xi_2\wedge d\xi_5+d\xi_3\wedge d\xi_6+d\xi_7\wedge d\xi_7
 $$
The symmetry algebra of this nonholonomic distribution is infinite-dimensional:
$\mathfrak{k}(1|7)$. It is isomorphic to $\mathcal{A}_M$, $f\mapsto X_f\in\op{Vect}(M)$,
equipped with the Lagrange bracket: $[X_f,X_h]=X_{[f,h]}$, where
 $$
X_f=f\p_u-(-1)^{|f|}\Bigl(\hat{\p}_{\xi_4}(f)\p_{\xi_1}+\hat{\p}_{\xi_5}(f)\p_{\xi_2}+\hat{\p}_{\xi_6}(f)\p_{\xi_3}
+\p_{\xi_1}(f)\hat{\p}_{\xi_4}+\p_{\xi_2}(f)\hat{\p}_{\xi_5}+\p_{\xi_3}(f)\hat{\p}_{\xi_6}
+\tfrac12\p_{\xi_7}(f)\hat{\p}_{\xi_7}\Bigr) 
 $$
and $\hat{\p}_{\xi_4}=\p_{\xi_4}-\xi_1\p_u$, $\hat{\p}_{\xi_5}=\p_{\xi_5}-\xi_2\p_u$,
$\hat{\p}_{\xi_6}=\p_{\xi_6}-\xi_3\p_u$, $\hat{\p}_{\xi_7}=\p_{\xi_7}-\xi_7\p_u$.

\smallskip

Consider a supersymmetric (odd) cubic on $\D$ (classically a 3-form):
 $$
q=d\xi_1\,d\xi_4\,d\xi_7+d\xi_2\,d\xi_5\,d\xi_7+d\xi_3\,d\xi_6\,d\xi_7-d\xi_1\,d\xi_2\,d\xi_3+d\xi_4\,d\xi_5\,d\xi_6
 $$
This $q$ reproduces $g$ on $\D$ by the classical relation $g(X,Y)\op{vol}_\D=(i_Xq)(i_Yq)q$,
where $\op{vol}_\D=\prod_1^7d\xi_i$.

The symbol algebra of $\D$ is $\m=\mathfrak{heis}(1|7)$ and $\op{pr}_0(\m)=\mathfrak{co}(7)$,
but the condition that elements $A\in\op{Aut}(\m)_0$ preserve the cubic $A\cdot q=0$ reduces the structure group
$CO(7)$ to $G_0=\C_\times G_2$. In this case $\op{pr}(\m,\g_0)=G(3)$ by Theorem \ref{main},
so by Corollary \ref{cor} the symmetry algebra of $(M,\D,q)$ is $G(3)$.

This gives a realization of $G(3)$ as supersymmetry algebra and realizes it by superalgebra of vector fields
in lowest possible dimension $(1|7)$.

We also note that this implies realization of $G(3)$ as a supersymmetry of a differential equation.
As noted above, integral curves are not enough to encode distributions in general, however for
contact distribution they are sufficient. But in our case we have reduction $(\D,q)$ of the structure group.
To take this into account, consider integral $(0|3)$ submanifolds $\varphi:\C^{0|3}\to M^{1|7}$
such that $\varphi^*\omega=0$, $\varphi^*q=0$. 

Tangents to those form an integral Grassmanian $\mathcal{E}$ inside $\op{Gr}_3(D)$. First of all note that 
the Lagrangian Grassmanian $\Lambda\subset\op{Gr}_3(D)$ of maximally isotropic 3-planes corresponds, classically,
to 3-null planes for a conformal metric structure in 7D, hence dimension\footnote{We count dimensions
over a fixed superpoint in $J^0(\C^{0|3},M)=\C^{0|3}\times M$.} of this quadratic variety is $6$.
$\mathcal{E}\subset\Lambda$ is given by a cubic equation, corresponding to $q$, and its dimension is 5.
We interpret it as a first order PDE $\mathcal{E}'\subset J^1(\C^{0|3},M^{1|7})$. 
Up to reparametrization of the sourse and splitting of the target, the equation is locally represented by
a PDE system $\mathcal{E}\subset J^1(\C^{0|3},\C^{1|4})$. 
It is given by $\op{codim}(\mathcal{E})=12-5=7$ odd nonlinear differential equations.

 \begin{theorem}
The contact symmetry superalgebra of the differential equation $\mathcal{E}$ is $G(3)$.
 \end{theorem}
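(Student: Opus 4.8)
The plan is to identify the contact symmetry superalgebra of $\mathcal{E}$ with $\op{Aut}(M,\D,q)$, which by Section \ref{SG371} (via Theorem \ref{main} and Corollary \ref{cor}) equals $G(3)$, and I would establish the two inclusions separately. For the \emph{easy} inclusion, observe that every $\Phi\in\op{Aut}(M,\D,q)$ preserves $\omega$ up to scale and preserves the cubic $q$, hence carries nondegenerate integral $(0|3)$-submanifolds to nondegenerate integral $(0|3)$-submanifolds, i.e.\ solutions of $\mathcal{E}$ to solutions. Since $\Phi$ preserves $\D$ it is in particular a contact transformation of $M$, and its prolongation to the jet space is a contact transformation preserving the integral Grassmannian $\mathcal{E}$. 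This realizes $G(3)$ inside the contact symmetry superalgebra of $\mathcal{E}$.

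For the \emph{reverse} inclusion I would show that $\mathcal{E}$ faithfully encodes the pair $(\D,q)$, so that every contact symmetry descends to an automorphism of $(M,\D,q)$. First, contact symmetries of $\mathcal{E}$ should project to point transformations of the total space $\C^{0|3}\times M$: since the local target $\C^{1|4}$ carries more than one dependent variable, the super analogue of the Lie--B\"acklund theorem applies and no genuine non-point contact symmetries occur. Next, over each $x\in M_o$ the fibre $\mathcal{E}_x\subset\op{Gr}_3(D_x)$ is the null variety of $q_x$ inside the Lagrangian Grassmannian $\Lambda_x$ of $g_x$ (the integral condition $\varphi^*\omega=0$ forces $\varphi^*d\omega=0$, hence isotropy). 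From $\mathcal{E}_x$ one recovers the reduced distribution $D_x=\op{span}\bigl(\bigcup_{P\in\mathcal{E}_x}P\bigr)$; the cubic $q_x$ as the scale-fixed defining form of the hypersurface $\mathcal{E}_x\subset\Lambda_x$; and the conformal form $g_x$ from $q_x$ via $g(X,Y)\op{vol}_\D=(i_Xq)(i_Yq)q$. Finally $\D$ itself is recovered from $D$ because, being a contact (depth $2$) structure, its generators have only linear coefficients in the odd coordinates, as noted in Section \ref{S6}.

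A point transformation preserving $\mathcal{E}$ must therefore preserve the reconstructed data $(\D,q)$ and hence lie in $\op{Aut}(M,\D,q)$. Together with the easy inclusion this gives that the contact symmetry superalgebra of $\mathcal{E}$ coincides with $\op{Aut}(M,\D,q)=G(3)$; as a numerical check the symmetry superalgebra has dimension $(17|14)$, in agreement with the majorization by $\op{pr}(\m,\g_0)$ used in Section \ref{SG371}.

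The main obstacle is the faithfulness/reconstruction step: one must verify that passing from $(\D,q)$ to the first-order system $\mathcal{E}$ loses no information, i.e.\ that $q$ is genuinely recoverable from the $5$-dimensional variety $\mathcal{E}\subset\Lambda$ and that the degenerations allowed for $(0|3)$-integral morphisms do not collapse $\mathcal{E}_x$ below the dimension needed to span $D_x$. Making the super Lie--B\"acklund step rigorous, so that contact symmetries are guaranteed to be prolongations of point transformations, is the other delicate ingredient; once both are in place, the identification with $G(3)$ is immediate from the facts already established for $(M,\D,q)$.
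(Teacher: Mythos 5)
Your proposal is correct in outline and follows the conceptual route the paper gestures at with ``this follows from the above arguments'': $\mathcal{E}$ is by construction the equation for $(0|3)$-submanifolds integral for $\D$ and null for $q$, so $\op{Aut}(M,\D,q)\simeq G(3)$ (via Theorem \ref{main} and Corollary \ref{cor}) acts by contact symmetries, and the converse amounts to showing that $\mathcal{E}$ faithfully encodes $(\D,q)$. Where you and the paper diverge is in how the hard direction is certified: the paper does not prove the two steps you rightly flag as delicate --- the super Lie--B\"acklund reduction of contact symmetries of the system in $J^1(\C^{0|3},\C^{1|4})$ to prolonged point transformations, and the recovery of $q_x$ (up to scale) and of $D_x$ from the $5$-dimensional variety $\mathcal{E}_x\subset\Lambda_x$ --- but instead settles the theorem by a direct symbolic computation in \textsc{Maple}, whose output is the displayed list of generating functions of $\mathcal{S}\simeq G(3)$. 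Your reconstruction chain ($\mathcal{E}_x\Rightarrow D_x\Rightarrow q_x\Rightarrow g_x$, then $\D$ from $D$ via linearity in the odd coordinates for depth-$2$ gradings, as in Section \ref{S6}) is the right skeleton for a computation-free argument; note that Section \ref{S6} already shows the null vectors span $(\g_{-1})_{\bar1}$ for $\fp^{\rm I}_1$, though you would still need that every such vector lies in a $q$-null Lagrangian $3$-plane to get the spanning from $\mathcal{E}_x$ itself. In short, your approach buys a conceptual proof at the price of two lemmas that remain to be written out, while the paper buys certainty by explicit computation; the two acknowledged gaps in your write-up are precisely the points the paper also leaves unproved.
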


This follows from the above arguments, 
but also can be computed directly in \textsc{Maple}, where $\mathcal{E}$ is provided explicitly.
Here is the algebra $\mathcal{S}$ given in terms of generating functions $f$ of symmetries 
$X_f\in \op{sym}(\mathcal{E})\simeq G(3)$:
 \begin{multline*}
\mathcal{S}_{\bar0}= \langle 1,\ \xi_1\xi_5,\ \xi_1\xi_6,\ \xi_2\xi_4,\ \xi_2\xi_6,\ \xi_3\xi_4,\ \xi_3\xi_5,\
\xi_1\xi_4-\tfrac23u,\ \xi_2\xi_5-\tfrac23u,\ \xi_3\xi_6-\tfrac23u, \\
\xi_1\xi_2+2\xi_6\xi_7,\ \xi_1\xi_3-2\xi_5\xi_7,\ \xi_2\xi_3+2\xi_4\xi_7,\ 
\xi_4\xi_6-2\xi_2\xi_7,\ \xi_4\xi_5+2\xi_3\xi_7,\ \xi_5\xi_6+2\xi_1\xi_7, \\ 
\tfrac12(\xi_1\xi_2\xi_4\xi_5+\xi_1\xi_3\xi_4\xi_6+\xi_2\xi_3\xi_5\xi_6)
 +(\xi_1\xi_2\xi_3+\xi_4\xi_5\xi_6)\xi_7
+ 2u(\xi_1\xi_4+\xi_2\xi_5+\xi_3\xi_6)-2u^2\rangle;
 \end{multline*}
 
\vskip-25pt
 
 \begin{multline*}
\mathcal{S}_{\bar1}= \langle
\xi_1,\ \xi_2,\ \xi_3,\ \xi_4,\ \xi_5,\ \xi_6,\ \xi_7,\ 
\xi_7(4u-2\xi_1\xi_4-2\xi_2\xi_5-2\xi_3\xi_6)+\xi_1\xi_2\xi_3+\xi_4\xi_5\xi_6\\
\xi_1(4u-\xi_2\xi_5-\xi_3\xi_6)-2\xi_5\xi_6\xi_7,\ 
\xi_2(4u-\xi_1\xi_4-\xi_3\xi_6)+2\xi_4\xi_6\xi_7,\ 
\xi_3(4u-\xi_1\xi_4-\xi_2\xi_5)-2\xi_4\xi_5\xi_7,\\
\xi_4(4u-3\xi_2\xi_5-3\xi_3\xi_6)-2\xi_2\xi_3\xi_7,\ 
\xi_5(4u-3\xi_1\xi_4-3\xi_3\xi_6)+2\xi_1\xi_3\xi_7,\ 
\xi_6(4u-3\xi_1\xi_4-3\xi_2\xi_5)-2\xi_1\xi_2\xi_7\rangle.
 \end{multline*}
 
\subsection{$F(4)$ irreducible geometry}\label{SF464}

Consider the manifold $M=\C^{6|4}$ with empty structure on $\m=T_0M$. Its symmetry corresponding to 
$\op{pr}(\m)=S^\bullet\m^*\otimes\m$ is $\op{Vect}(M)$.
The structure group is $GL(6|4)$ and we impose flat (parallel translated) structure reduction
to $G_0=COSp(2|4)$ with the following Lie superalgebra
 $$
\left[
\begin{array}{cccccc|cccc}
2a_2 &  0 &  0 &  0 &  0 &  0 &  b_8 &  b_7 &  b_5 &  b_6\\
0 &  0 &  a_9 &  2a_6 &  a_4 &  2a_{11} &  b_1 &  b_2 &  b_4 &  b_3\\
0 &  2a_6 &  a_1 + a_3 &  0 &  -a_7 &  -2a_5 &  2b_3 &  2b_4 &  0 &  0\\
0 &  a_9 &  0 &  -a_1 - a_3 &  a_8 &  a_{10} &  0 &  0 &  b_2 &  b_1\\
0 &  2a_{11} &  -a_{10} &  2a_5 &  a_1 - a_3 &  0 &  2b_2 &  0 &  0 &  -2b_4\\
0 &  a_4 &  -a_8 &  a_7 &  0 &  a_3 - a_1 &  0 &  b_1 &  -b_3 &  0\\
\hline
3b_4 &  -b_5 &  -b_7 &  0 &  b_6 &  0 &  a_1 + a_2 &  a_{11} &  a_5 &  a_6\\
-3b_3 &  b_6 &  b_8 &  0 &  0 &  -2b_5 &  a_4 &  a_2 + a_3 &  a_6 &  a_7\\
-3b_1 &  b_8 &  0 &  2b_6 &  0 &  2b_7 &  2a_8 &  a_9 &  a_2-a_1 &  -a_4\\
3b_2 &  -b_7 &  0 &  -2b_5 &  -b_8 &  0 &  a_9 &  a_{10} &  -a_{11} &  a_2 - a_3
\end{array}
\right]+a_{12}\cdot\mathbf{1}_{10}
 $$
embedded into $\mathfrak{gl}(6|4)$. Here $a_i$ are even parameters and $b_j$ are odd parameters,
yielding $(12|8)$ dimensional subalgebra, which is equal to $\g_0$ in the unique $1$-grading of $F(4)$ 
corresponding to $\fp^{\rm I}_4$ (details of this computation are in supplementary \textsc{Maple}).
Then from Corollary \ref{cor} we conclude:

 \begin{theorem}
The symmetry superalgebra of this $G_0$-structure is exactly $F(4)$.
  \end{theorem}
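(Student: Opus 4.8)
The plan is to recognize the given data as the flat model of a $|1|$-graded (first-order) parabolic supergeometry and then read off the symmetry from Theorem \ref{main}. First I would fix the algebraic picture: on $M=\C^{6|4}$ the symbol $\m=T_0M$ is the abelian Lie superalgebra of dimension $(6|4)$ concentrated in degree $-1$, so $\mu=1$ and, with no structure imposed, $\op{pr}(\m)=S^\bullet\m^*\otimes\m=\mathfrak{vect}(6|4)$ recovers $\op{Vect}(M)$ exactly as stated. Passing to $G_0\subset GL(\m)=GL(6|4)$ is a first-order $G$-structure; for the flat (constant-coefficient, parallel-translated) model we have imposed, its infinitesimal symmetry superalgebra is identified with the Tanaka--Weisfeiler prolongation $\op{pr}(\m,\g_0)$. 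The upper bound symmetry $\subseteq\op{pr}(\m,\g_0)$ is the majorization result of \cite{KST2} in the super-setting, while the lower bound is automatic: by construction $F(4)$ acts on $F(4)/P^{\rm I}_4$ preserving the induced geometry, hence on its open cell $\exp(\m)\cong\C^{6|4}$, so $F(4)$ already sits inside the symmetry superalgebra.

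The decisive step is to verify that the explicit $(12|8)$-parameter matrix family displayed above is exactly the degree-zero component $\g_0^{\rm I,4}$ of the unique $|1|$-grading of $F(4)$ attached to the parabolic $\fp^{\rm I}_4$, acting on $\m=\g_{-1}^{\rm I,4}$ through its defining representation. Concretely I would check that the family is closed under the super-bracket (so it is a Lie subsuperalgebra of $\mathfrak{gl}(6|4)$), that it is isomorphic to $\mathfrak{cosp}(2|4)$ as recorded in Table \ref{reduction F4}, and that the induced $(6|4)$-dimensional module reproduces the $\g_0$-module structure of $\g_{-1}^{\rm I,4}$ read off from Table \ref{negative roots F4}. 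This representation-theoretic matching between an abstract grading component and a concrete $\mathfrak{gl}(6|4)$-embedding is where I expect the genuine work to lie; it is precisely the point the paper discharges through the supplementary \textsc{Maple} computation.

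Granting this identification, the theorem follows at once. Since $\m^{\rm I}_4$ is one of the exceptional symbols of Table \ref{reduction F4} --- the irreducible $(6|4)$ case whose unreduced prolongation $\mathfrak{vect}(6|4)$ is infinite --- Theorem \ref{main} supplies the reduced equality $\op{pr}(\m^{\rm I}_4,\g_0^{\rm I,4})=F(4)$; equivalently, $H^1(\m,\g)_+=0$ in the cohomological criterion recalled in Section \ref{S2}. Combining the two bounds of the first paragraph with this value, the symmetry superalgebra of the $G_0$-structure is squeezed between $F(4)$ and $\op{pr}(\m,\g_0)=F(4)$, hence equals $F(4)$ exactly, as also asserted by Corollary \ref{cor}. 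The flat model is the open cell of the irreducible generalized flag supervariety $F(4)/P^{\rm I}_4$, so this yields the desired realization.
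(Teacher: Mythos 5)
Your proposal is correct and follows essentially the same route as the paper: identify the displayed $(12|8)$-dimensional matrix family with $\g_0^{\rm I,4}\cong\mathfrak{cosp}(2|4)$ in the unique $|1|$-grading of $F(4)$ attached to $\fp^{\rm I}_4$ (the step the paper delegates to the supplementary \textsc{Maple} computation), then invoke Theorem \ref{main} for $\op{pr}(\m^{\rm I}_4,\g_0^{\rm I,4})=F(4)$ together with the majorization of \cite{KST2} and the tautological $F(4)$-action on the open cell of $F(4)/P^{\rm I}_4$, exactly as packaged in Corollary \ref{cor}. Your explicit spelling-out of the upper and lower bounds is a slightly more detailed rendering of the same argument, not a different one.
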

  
The corresponding coordinate realization is given in Appendix \ref{S10}.

 \begin{remark}
Lie superalgebra $\mathfrak{osp}(2|4)$, realized as the supertrace-free part of the above matrix, 
is represented in dimension $(6|4)$. There are two $(6|4)$ ireps of this algebra, dual to each other,
both atypical \cite[p.366]{FSS}. 
 \end{remark}

Geometric structures responsible for this reduction are the following. Decompose 
 $$
\g_0=\mathfrak{z}(\g_0)\oplus\g_0^{ss}=\C\oplus\mathfrak{osp}(2|4),\quad
\g_0^{ss}=\mathfrak{l}=\mathfrak{l}_{\bar0}\oplus\mathfrak{l}_{\bar1}, 
 $$
where $\mathfrak{l}_{\bar0}
=\mathfrak{z}(\mathfrak{l}_{\bar0})\oplus\mathfrak{l}_{\bar0}^{ss}=\mathfrak{so}(2)\oplus\mathfrak{sp}(4)$
and $\mathfrak{l}_{\bar1}=\mathfrak{l}_{-1}\oplus\mathfrak{l}_{+1}=\C_{-}\boxtimes\C^4\oplus\C_{+}\boxtimes\C^4$.
Above $\mathfrak{z}(\g_0)$ is generated by the grading element yielding the decomposition of $F(4)$:
$\g=\g_{-1}\oplus\g_0\oplus\g_1$.

We have $\mathfrak{l}_{\bar0}$-invariant splitting 
$\m=\g_{-1}=(\mathbb{V}^5_0\oplus\mathbb{V}^1_2|\mathbb{V}^4_1)$ with subscripts 
indicating the weights of $\mathfrak{z}(\mathfrak{l}_{\bar0})=\C$, where
with respect to $\mathfrak{l}_{\bar0}^{ss}=C_2$ we have:
$\mathbb{V}^1=[00]=\C$, $\mathbb{V}^4=[10]=\C^4$, $\mathbb{V}^5=[01]=\Lambda^2_0\C^4$.
This splitting is respected by $\mathfrak{l}_{\bar1}$ since 
$\mathfrak{l}_{-1}:\mathbb{V}^1_2\to\mathbb{V}^4_1\to\mathbb{V}^5_0$ and
$\mathfrak{l}_{+1}:\mathbb{V}^5_0\to\mathbb{V}^4_1\to\mathbb{V}^1_2$.

In addition we have $\mathfrak{l}_{\bar0}^{ss}$-invariant tensors
 $$
g\in S^2(\mathbb{V}^5)^*,\ \ 
q\in S^2(\mathbb{V}^1)^*,\ \ 
\omega\in\Lambda^2(\mathbb{V}^4)^*
 $$
of $\mathfrak{z}(\g_0)$ weights 2 each and of $\mathfrak{z}(\mathfrak{l}_{\bar0})$ weights $0,-4,-2$ respectively. 
We also have the following $\mathfrak{l}_{\bar0}$-invariant components of ``multiplication'' $\m\otimes\m\to\m$:
$\Lambda^2\mathbb{V}^4\to\mathbb{V}^1$, $\Lambda^2\mathbb{V}^4\to\mathbb{V}^5\otimes\mathbb{V}^1$, 
$\mathbb{V}^5\otimes\mathbb{V}^1\to\Lambda^2\mathbb{V}^4$.

These tensors and the other relative invariant components, converted to absolute conformal invariants (of weight 0 
with respect to $\mathfrak{z}(\mathfrak{l}_{\bar0})$) via density $q$,
define the structure reduction\footnote{Absolute invariants
have weight 0 wrt $\mathfrak{z}(\g_0)\oplus\mathfrak{z}(\mathfrak{l}_{\bar0})$ and can be derived
via the pair $(g,q)$; for instance $g^{-1}\omega^2q^{-1}$ is such.}. 
Actually, the adjoint representation of $\mathfrak{gl}(6|4)=\m\otimes\m^*$ branched over subalgebra
$\mathfrak{l}=\mathfrak{osp}(2|4)$ is the sum of the trivial representation, small adjoint and 
a unique typical irrep of dimension $(40|40)$, see \cite[Table 3.64]{FSS}, corresponding to dimension split
$(52|48)=(1|0)+(11|8)+(40|40)$:
 $$
\mathfrak{gl}(\m)_{\mathfrak{l}}=\mathbf{1}_{\mathfrak{l}}+\mathfrak{ad}_{\mathfrak{l}}+\mathfrak{R}_{80}. 
 $$
The last component is isomorphic to the cohomology group $H^{0,1}(\m,\g)$ 
for $\g=F(4)$ and $\m$ corresponding to grading via $\fp_4^{\rm I}$.
The projection along the last component is precisely the supersymmetric reduction.

\section{Concluding remarks}\label{S8}

We found realizations of all \(G(3)\) and \(F(4)\) supergeometries (flat parabolic = generalized flag supermanifolds) 
through Tanaka-Weisfeiler prolongations. 
 (Note that realization of a Lie superalgebra $\g$ by an algebra of vector fields can be considered as an instance of Lie's
 third theorem, as this implies local group action via integration; see \cite{GW,MSV,SW,Shc} for discussions of flows.)
In all cases the superalgebras correspond to symmetries of superdistributions or their strucutre reductions.
No higher order reductions was involved. 

The statement is equivalent to vanishing of the Chevaley-Eilienberg cohomology $H^1(\m,\g)_+=0$,
or respectively $H^1(\m,\g)_{\ge0}=0$ for distributions with no reductions, and computing those is a hard task.
In the classical case, this is done via the Kostant theorem \cite{Ko} but in the supercase it is not available.
The technique of spectral sequences and Kostant's version of BBW theorem, exploited in \cite{KST,ST},
is time consuming (computation for 2 cases in \cite{KST} and 2 cases in \cite{ST} occupy many pages) and 
is not reasonable for the remaining 17+53 cases.
Therefore our computations were done explicitly in \textsc{Maple}, using the algorithm in Section \ref{algorithm}. 
Complete results are available as supplementary to the arXiv version of this paper.

Let us remark that a superversion of the Bott-Borel-Weil (BBW) theorem on the sheaf cohomology was addressed 
in the literature \cite{Co}. However the results obtained do not cover representations involved in our work. 
In \cite{V1,V2}, using BBW theorem, global vector fields on generalized flag supermanifolds $G/P$ were computed, 
and in many (nonexceptional) cases they were proven fundamental, i.e.\ isomorphic to $\g=\op{Lie}(G)$. 

The technique used in the proof involves twistor correspondence, i.e.\ bundles $G/P\to G/Q$ with typical fiber $F=Q/P$
for nested parabolics $P\subset Q$. If this fiber $F$ admits no global superfunctions,
i.e.\ $\mathcal{O}(F)|_{F_0}=\C$ (classically by compactness reasons; see Proposition 1 of \cite{V1} addressing 
the super case) then fields from $\op{Vect}(G/P)$ are projectible to $\op{Vect}(G/Q)$ and the homomorphism 
relating these algebras gives a tool to simplify computations. Note that global vector fields assume no invariant
geometry, but a posteriori they preserve the canonical distribution on $G/P$ and a possible structure reduction.

Our work is local\footnote{Global version is also possible due to \cite{O,KST2}.}, but involves nonholonomic geometric structures to restrict the symmetry from the start. 
Twistor correspondences, like those of Figures \ref{geometries_diagram} and \ref{F4map}, 
allow to simplify some computations too.
For instance, the projection of $G(3)$ geometries $M^{\rm IV}_{12}\to M^{\rm IV}_2$ and geometric 
prolongation established in \cite{KST}, allowed to reduce computation of the prolongation 
for the algebra $\m^{\rm IV}_{12}$. We expect more applications of the same idea.
We have not used it in the current work, but observe that it holds true a posteriori.

\appendix

\section{\(G(3)\) and \(F(4)\) superdistributions}\label{S9}

Here we provide dimensions of the gradings of $\m^\Xi_\cch$, or equivalently growth vectors for
the corresponding distributions $\D$ on $M^\Xi_\cch$ for all generalized flag varieties considered in this paper.
$G(3)$ case was presented in \cite{KST}, we repeat it for completeness. For $F(4)$ case 
only maximal parabolics were recorded in \cite{ST} previously.
 
\LTcapwidth=\linewidth
\begin{longtable}{|l|c|c|l|}
\hline
$P^{\Xi}_\cch$ & $\dim M^{\Xi}_\cch$\! & $\mu_{\mathcal{D}}$ & $\mbox{Growth vector }$
\vphantom{$\dfrac{A}{.}$} \\ \hline
$P^{\text{I}}_{1}$	 &	 $(1 | 7)	$ &	 2	 &	 $(0 | 7, 1 | 0)$ \vphantom{$\dfrac{A}{.}$} \\[-3pt]
$P^{\text{I}}_{2}$	 &	 $(6 | 6)	$ &	 4	 &	 $(2 | 2, 1 | 1, 2 | 2, 1 | 1)$ \\
$P^{\text{I}}_{3}$	 &	 $(6 | 5)	$ &	 2	 &	 $(4 | 3, 2 | 2)$ \\
$P^{\text{I}}_{12}$	 &	 $(6 | 7)	$ &	 6	 &	 $(2 | 1, 1 | 2, 2 | 1, 0 | 2, 0 | 1, 1 | 0)$ \\
$P^{\text{I}}_{13}$	 &	 $(6 | 7)	$ &	 4	 &	 $(4 | 2, 1 | 3, 0 | 2, 1 | 0)$ \\
$P^{\text{I}}_{23}$	 &	 $(7 | 6)	$ &	 6	 &	 $(2 | 1, 1 | 1, 1 | 1, 1 | 1, 1 | 1, 1 | 1)$ \\
$P^{\text{I}}_{123}$ &	 $(7 | 7)	$ &	 8	 &	 $(2 | 1, 1 | 1, 1 | 1, 1 | 1, 1 | 1, 0 | 1, 0 | 1, 1 | 0)$ \\
$P^{\text{II}}_{1}$	 &	 $(6 | 5)	$ &	 3	 &	 $(2 | 2, 2 | 2, 2 | 1)$ \\
$P^{\text{II}}_{12}$ &	 $(6 | 7)	$ &	 7	 &	 $(0 | 3, 2 | 0, 0 | 1, 1 | 0, 0 | 2, 3 | 0, 0 | 1)$ \\
$P^{\text{II}}_{13}$ &	 $(7 | 6)	$ &	 5	 &	 $(2 | 2, 1 | 1, 1 | 1, 2 | 1, 1 | 1)$ \\
$P^{\text{II}}_{123}$ &	 $(7 | 7)	$ &	 9	 &	 $(1 | 2, 1 | 1, 1 | 0, 0 | 1, 1 | 0, 0 | 1, 1 | 1, 2 | 0, 0 | 1)$ \\
$P^{\text{III}}_{1}$	 &	 $(5 | 4)	$ &	 2	 &	 $(4 | 4, 1 | 0)$ \\
$P^{\text{III}}_{12}$ &	 $(6 | 7)	$ &	 4	 &	 $(0 | 5, 5 | 0, 0 | 2, 1 | 0)$ \\
$P^{\text{III}}_{13}	$ &	 $(7 | 6)	$ &	 5	 &	 $(2 | 2, 2 | 2, 1 | 1, 1 | 1, 1 | 0)$ \\
$P^{\text{III}}_{123}$ & $(7 | 7)$ &	 7	 &	 $(1 | 2, 1 | 2, 1 | 1, 2 | 0, 1 | 1, 0 | 1, 1 | 0)$ \\
$P^{\text{IV}}_{2}$	 &	 $(5 | 6)	$ &	 3	 &	 $(2 | 4, 1 | 2, 2 | 0)$ \\
$P^{\text{IV}}_{12}$ &	 $(6 | 6)	$ &	 5	 &	 $(2 | 2, 1 | 2, 1 | 2, 1 | 0, 1 | 0)$ \\
$P^{\text{IV}}_{23}	$ &	 $(6 | 7)	$ &	 6	 &	 $(0 | 3, 3 | 0, 0 | 3, 1 | 0, 0 | 1, 2 | 0)$ \\
$P^{\text{IV}}_{123}$ &	 $(7 | 7)$ &	 8	 &	 $(1 | 2, 2 | 1, 1 | 1, 0 | 2, 1 | 0, 0 | 1, 1 | 0, 1 | 0)$ \\
\hline
 \caption*{\vphantom{$\dfrac{A^a}{A}$} \textsc{Table} \ref{G3_distributions_table1}. %
Distributions on generalized flag supermanifolds $M^{\Xi}_\cch=G(3)/P^{\Xi}_\cch$.}
 \label{G3_distributions_table1}
\end{longtable}
 
\vskip-15pt

\LTcapwidth=\linewidth
\begin{longtable}{|l|c|c|l|}
\hline
$P^{\Xi}_\cch$ 	& $\dim M^{\Xi}_\cch\!$  & $\mu_{\mathcal{D}}$ & $\mbox{Growth vector }$
\vphantom{$\dfrac{A}{.}$} 
\\ \hline
$P^{\text{I}}_{1}$		 &	 $(1 | 8)$	 &	 2	 &	 $(0 | 8, 1 | 0)$ \vphantom{$\dfrac{A}{.}$} \\[-3pt]
$P^{\text{I}}_{2}$		 &	 $(7 | 7)$	 &	 3	 &	 $(3 | 3, 3 | 3, 1 | 1)$ \\
$P^{\text{I}}_{3}$		 &	 $(8 | 6)	$ 	 &	 2	 &	 $(6 | 4, 2 | 2)$ \\
$P^{\text{I}}_{4}$		 &	 $(6 | 4)	$ 	 &	 1	 &	 $(6 | 4)$ \\
$P^{\text{I}}_{12}$		 &	 $(7 | 8)	$	 &	 5	 &	 $(3 | 1, 3 | 3, 0 | 3, 0 | 1, 1 | 0)$ \\
$P^{\text{I}}_{13}$		 &	 $(8 | 8)	$	 &	 4	 &	 $(6 | 2, 1 | 4, 0 | 2, 1 | 0)$ \\
$P^{\text{I}}_{14}$		 &	 $(6 | 8)	$	 &	 3	 &	 $(5 | 4, 0 | 4, 1 | 0)$ \\
$P^{\text{I}}_{23}$		 &	 $(9 | 7)	$	 &	 5	 &	 $(3 | 1, 2 | 2, 2 | 2, 1 | 1, 1 | 1)$ \\
$P^{\text{I}}_{24}$		 &	 $(9 | 7)	$	 &	 4	 &	 $(4 | 2, 2 | 2, 2 | 2, 1 | 1)$ \\
$P^{\text{I}}_{34}$		 &	 $(9 | 6)	$	 &	 3	 &	 $(4 | 2, 3 | 2, 2 | 2)$ \\
$P^{\text{I}}_{123}$	 &	 $(9 | 8)	$	 &	 7	 &	 $(3 | 1, 2 | 1, 2 | 2, 1 | 2, 0 | 1, 0 | 1, 1 | 0)$ \\
$P^{\text{I}}_{124}$	 &	 $(9 | 8)	$	 &	 6	 &	 $(4 | 1, 2 | 2, 2 | 2, 0 | 2, 0 | 1, 1 | 0)$ \\
$P^{\text{I}}_{134}$	 &	 $(9 | 8)	$	 &	 5	 &	 $(4 | 2, 3 | 2, 1 | 2, 0 | 2, 1 | 0)$ \\
$P^{\text{I}}_{234}$	 &	 $(10 | 7)$	 &	 6	 &	 $(3 | 1, 2 | 1, 2 | 2, 1 | 1, 1 | 1, 1 | 1)$ \\
$P^{\text{I}}_{1234}$	 &	 $(10 | 8)$	 &	 8	 &	 $(3 | 1, 2 | 1, 2 | 1, 1 | 2, 1 | 1, 0 | 1, 0 | 1, 1 | 0)$ \\
$P^{\text{II}}_{1}$		 &	 $(7 | 5)	$	 &	 2	 &	 $(4 | 4, 3 | 1)$ \\
$P^{\text{II}}_{12}$	 &	 $(7 | 8)	$	 &	 5	 &	 $(0 | 4, 3 | 0, 0 | 3, 4 | 0, 0 | 1)$ \\
$P^{\text{II}}_{13}$	 &	 $(9 | 7)	$	 &	 4	 &	 $(3 | 3, 2 | 2, 3 | 1, 1 | 1)$ \\
$P^{\text{II}}_{14}$	 &	 $(9 | 6)	$	 &	 3	 &	 $(4 | 3, 3 | 2, 2 | 1)$ \\
$P^{\text{II}}_{123}$	 &	 $(9 | 8)	$	 &	 7	 &	 $(2 | 2, 1 | 2, 2 | 0, 0 | 2, 2 | 1, 2 | 0, 0 | 1)$ \\
$P^{\text{II}}_{124}$	 &	 $(9 | 8)	$	 &	 6	 &	 $(2 | 3, 2 | 1, 1 | 1, 1 | 2, 3 | 0, 0 | 1)$ \\
$P^{\text{II}}_{134}$	 &	 $(10 | 7)$	 &	 5	 &	 $(3 | 2, 2 | 2, 2 | 1, 2 | 1, 1 | 1)$ \\
$P^{\text{II}}_{1234}$	 &	 $(10 | 8)$	 &	 8	 &	 $(2 | 2, 2 | 1, 1 | 1, 1 | 1, 1 | 1, 1 | 1, 2 | 0, 0 | 1)$ \\
$P^{\text{III}}_{3}$		 &	 $(7 | 4)	$	 &	 2	 &	 $(6 | 4, 1 | 0)$ \\
$P^{\text{III}}_{13}	$	 &	 $(9 | 6)	$	 &	 3	 &	 $(5 | 4, 3 | 2, 1 | 0)$ \\
$P^{\text{III}}_{23}$	 &	 $(8 | 8)	$	 &	 4	 &	 $(0 | 6, 7 | 0, 0 | 2, 1 | 0)$ \\
$P^{\text{III}}_{34}	$	 &	 $(9 | 6)	$	 &	 4	 &	 $(4 | 3, 2 | 2, 2 | 1, 1 | 0)$ \\
$P^{\text{III}}_{123}$	 &	 $(9 | 8)	$	 &	 5	 &	 $(1 | 4, 3 | 2, 4 | 0, 0 | 2, 1 | 0)$ \\
$P^{\text{III}}_{134}$	 &	 $(10 | 7)$	 &	 5	 &	 $(3 | 3, 3 | 2, 2 | 1, 1 | 1, 1 | 0)$ \\
$P^{\text{III}}_{234}$	 &	 $(9 | 8)	$	 &	 6	 &	 $(1 | 3, 2 | 3, 3 | 0, 2 | 1, 0 | 1, 1 | 0)$ \\
$P^{\text{III}}_{1234}$	 &	 $(10 | 8)$	 &	 7	 &	 $(2 | 2, 1 | 3, 2 | 1, 3 | 0, 1 | 1, 0 | 1, 1 | 0)$ \\
$P^{\text{IV}}_{3}$		 &	 $(8 | 6)$	 &	 3	 &	 $(4 | 4, 2 | 2, 2 | 0)$ \\
$P^{\text{IV}}_{13}$	 &	 $(9 | 7)	$	 &	 4	 &	 $(3 | 3, 3 | 3, 1 | 1, 2 | 0)$ \\
$P^{\text{IV}}_{23}$	 &	 $(9 | 7)	$	 &	 5	 &	 $(3 | 3, 2 | 2, 1 | 1, 1 | 1, 2 | 0)$ \\
$P^{\text{IV}}_{34}$	 &	 $(9 | 6)	$	 &	 5	 &	 $(3 | 2, 2 | 2, 2 | 2, 1 | 0, 1 | 0)$ \\
$P^{\text{IV}}_{123}$	 &	 $(9 | 8)	$	 &	 6	 &	 $(0 | 4, 5 | 0, 0 | 3, 2 | 0, 0 | 1, 2 | 0)$ \\
$P^{\text{IV}}_{134}$	 &	 $(10 | 7)$	 &	 6	 &	 $(3 | 2, 2 | 2, 2 | 2, 1 | 1, 1 | 0, 1 | 0)$ \\
$P^{\text{IV}}_{234}$	 &	 $(10 | 7)$	 &	 7	 &	 $(3 | 2, 2 | 2, 1 | 1, 1 | 1, 1 | 1, 1 | 0, 1 | 0)$ \\
$P^{\text{IV}}_{1234}$	 &	 $(10 | 8)$	 &	 8	 &	 $(1 | 3, 3 | 1, 2 | 1, 0 | 2, 2 | 0, 0 | 1, 1 | 0, 1 | 0)$ \\
$P^{\text{V}}_{2}$		 &	 $(5 | 8)	$	 &	 2	 &	 $(0 | 8, 5 | 0)$ \\
$P^{\text{V}}_{12}$	 &	 $(6 | 8)	$	 &	 3	 &	 $(1 | 4, 0 | 4, 5 | 0)$ \\
$P^{\text{V}}_{23}$	 &	 $(8 | 8)	$	 &	 5	 &	 $(2 | 2, 1 | 4, 2 | 2, 1 | 0, 2 | 0)$ \\
$P^{\text{V}}_{24}$	 &	 $(8 | 8)	$	 &	 4	 &	 $(3 | 4, 1 | 4, 3 | 0, 1 | 0)$ \\
$P^{\text{V}}_{123}$	 &	 $(9 | 8)	$	 &	 6	 &	 $(3 | 1, 1 | 3, 0 | 3, 2 | 1, 1 | 0, 2 | 0)$ \\
$P^{\text{V}}_{124}$	 &	 $(9 | 8)	$	 &	 5	 &	 $(4 | 2, 0 | 4, 1 | 2, 3 | 0, 1 | 0)$ \\
$P^{\text{V}}_{234}$	 &	 $(9 | 8)	$	 &	 7	 &	 $(2 | 2, 1 | 2, 2 | 2, 1 | 2, 1 | 0, 1 | 0, 1 | 0)$ \\
$P^{\text{V}}_{1234}$	 &	 $(10 | 8)$	 &	 8	 &	 $(3 | 1, 1 | 2, 1 | 2, 1 | 2, 1 | 1, 1 | 0, 1 | 0, 1 | 0)$ \\
$P^{\text{VI}}_{2}$		 &	 $(6 | 8)	$	 &	 4	 &	 $(0 | 6, 3 | 0, 0 | 2, 3 | 0)$ \\
$P^{\text{VI}}_{12}$	 &	 $(7 | 8)	$	 &	 6	 &	 $(1 | 3, 0 | 3, 3 | 0, 0 | 1, 0 | 1, 3 | 0)$ \\
$P^{\text{VI}}_{23}$	 &	 $(8 | 8)	$	 &	 7	 &	 $(2 | 2, 0 | 4, 2 | 0, 1 | 0, 0 | 2, 1 | 0, 2 | 0)$ \\
$P^{\text{VI}}_{24}$	 &	 $(8 | 8)	$	 &	 6	 &	 $(2 | 4, 1 | 2, 2 | 0, 0 | 2, 2 | 0, 1 | 0)$ \\
$P^{\text{VI}}_{123}$	 &	 $(9 | 8)	$	 &	 9	 &	 $(3 | 1, 0 | 3, 0 | 2, 2 | 0, 1 | 0, 0 | 1, 0 | 1, 1 | 0, 2 | 0)$ \\
$P^{\text{VI}}_{124}$	 &	 $(9 | 8)	$	 &	 8	 &	 $(3 | 2, 0 | 3, 1 | 1, 2 | 0, 0 | 1, 0 | 1, 2 | 0, 1 | 0)$ \\
$P^{\text{VI}}_{234}$	 &	 $(9 | 8)	$	 &	 9	 &	 $(2 | 2, 1 | 2, 1 | 2, 1 | 0, 1 | 0, 0 | 2, 1 | 0, 1 | 0, 1 | 0)$ \\
$P^{\text{VI}}_{1234}$	 &	 $(10 | 8)$	 &	 11	 &	 $(3 | 1, 1 | 2, 0 | 2, 1 | 1, 1 | 0, 1 | 0, 0 | 1, 0 | 1, 1 | 0, 1 | 0, 1 | 0)$ \\
\hline
 \caption*{\vphantom{$\dfrac{A^a}{A}$} \textsc{Table} \ref{F4_distributions_table2}. %
Distributions on generalized flag supermanifolds $M^{\Xi}_\cch=F(4)/P^{\Xi}_\cch$.}
 \label{F4_distributions_table2}
\end{longtable}

\section{Realization of \(F(4)\) in $(6|4)$ dimensions}\label{S10}

The realization of $F(4)$ from \S\ref{SF464} on $\C^{6|4}(x_i,\xi_j)$ is as follows:
 \[
\g_{-1}=\langle\p_{x_1},\dots,\p_{x_6}\,|\,\p_{\xi_1},\dots,\p_{\xi_4}\rangle,
 \]
 
\vskip-20pt 
 
 \begin{multline*}
(\g_0)_{\bar0}= \langle
x_3\p_{x_3} - x_4\p_{x_4} + x_5\p_{x_5} - x_6\p_{x_6} +\xi_1\p_{\xi_1} - \xi_3\p_{\xi_3}, \
2 x_1 \p_{x_1} + \xi_1 \p_{\xi_1} + \xi_2 \p_{\xi_2} + \xi_3 \p_{\xi_3} + \xi_4 \p_{\xi_4},\\
x_3 \p_{x_3} - x_4 \p_{x_4} - x_5 \p_{x_5} + x_6 \p_{x_6} + \xi_2 \p_{\xi_2} - \xi_4 \p_{\xi_4}, \
x_6 \p_{x_3} - x_4 \p_{x_5} - 2 \xi_3 \p_{\xi_1},\
2 x_5 \p_{x_4} - 2 x_3 \p_{x_6} + \xi_1 \p_{\xi_3},\\
x_4 \p_{x_2} + x_2 \p_{x_3} + \xi_4 \p_{\xi_1} + \xi_3 \p_{\xi_2}, \
x_5 \p_{x_3} - x_4 \p_{x_6} - \xi_4 \p_{\xi_2}, \
2 x_3 \p_{x_2} + 2 x_2 \p_{x_4} + \xi_2 \p_{\xi_3} + \xi_1 \p_{\xi_4}, \\
x_6 \p_{x_2} + x_2 \p_{x_5} + \xi_2 \p_{\xi_1} - \xi_3 \p_{\xi_4}, \
x_6 \p_{x_4} -x_3 \p_{x_5} + \xi_2 \p_{\xi_4}, \
2 x_5 \p_{x_2} + 2 x_2 \p_{x_6} + \xi_1 \p_{\xi_2} - \xi_4 \p_{\xi_3},\\
x_1 \p_{x_1} + x_2 \p_{x_2} + x_3 \p_{x_3} + x_4 \p_{x_4} + x_5 \p_{x_5} + x_6 \p_{x_6}+ \xi_1 \p_{\xi_1} + \xi_2 \p_{\xi_2} + \xi_3 \p_{\xi_3} + \xi_4 \p_{\xi_4}
\rangle,
 \end{multline*}
 
\vskip-20pt 

 \begin{multline*}
(\g_0)_{\bar1}= \langle
3 \xi_1 \p_{x_1} + 2 x_3 \p_{\xi_2} + x_2 \p_{\xi_3} - 2 x_5 \p_{\xi_4}, \
3 \xi_2 \p_{x_1} - 2 x_3 \p_{\xi_1} + x_6 \p_{\xi_3} - x_2 \p_{\xi_4},\\
3 \xi_3 \p_{x_1} - x_2 \p_{\xi_1} - x_6 \p_{\xi_2} - x_4 \p_{\xi_4}, \
3 \xi_4 \p_{x_1} + 2 x_5 \p_{\xi_1} + x_2 \p_{\xi_2} + x_4 \p_{\xi_3}, \
\xi_3 \p_{x_2} + \xi_2 \p_{x_3} - \xi_4 \p_{x_5} + x_1 \p_{\xi_1},\\
\xi_4 \p_{x_2} + \xi_1 \p_{x_3} - 2 \xi_3 \p_{x_6} - x_1 \p_{\xi_2}, \
\xi_1 \p_{x_2} + 2 \xi_4 \p_{x_4} + 2 \xi_2 \p_{x_6} - x_1 \p_{\xi_3}, \
\xi_2 \p_{x_2} + 2 \xi_3 \p_{x_4}  + \xi_1 \p_{x_5} + x_1 \p_{\xi_4}
\rangle,
 \end{multline*}

\vskip-20pt 

 \begin{multline*}
(\g_1)_{\bar0}= \langle
   x_1^2\p_{x_1} + (\xi_1\xi_3 - \xi_2\xi_4) \p_{x_2} + \xi_1\xi_2 \p_{x_3}
- 2\xi_3\xi_4 \p_{x_4} - \xi_1\xi_4 \p_{x_5} + 2\xi_2\xi_3 \p_{x_6}
+ x_1 (\xi_1 \p_{\xi_1} + \xi_2 \p_{\xi_2} + \xi_3 \p_{\xi_3} + \xi_4 \p_{\xi_4}),\\
   3\xi_1\xi_2\p_{x_1} - (x_2^2 - 2x_5x_6)\p_{x_4} 
- 2x_3 (x_2\p_{x_2} +x_3\p_{x_3} +x_5\p_{x_5} +x_6\p_{x_6} +\xi_1\p_{\xi_1} +\xi_2\p_{\xi_2})
- (x_2\xi_2 - x_6\xi_1)\p_{\xi_3} - (x_2\xi_1 - 2x_5\xi_2)\p_{\xi_4},\\
   3\xi_3\xi_4\p_{x_1}  + \tfrac12(x_2^2 - 2x_5x_6)\p_{x_3}
+ x_4 (x_2\p_{x_2} + x_4\p_{x_4} + x_5\p_{x_5} + x_6\p_{x_6} + \xi_3\p_{\xi_3} + \xi_4\p_{\xi_4})
+ (x_2\xi_4 + 2x_5\xi_3)\p_{\xi_1} + (x_2\xi_3 + x_6\xi_4)\p_{\xi_2}, \\
3\xi_1\xi_4\p_{x_1} + (x_2^2 - 2x_3x_4)\p_{x_6} 
+ 2x_5 (x_2\p_{x_2} + x_3\p_{x_3} + x_4\p_{x_4} + x_5\p_{x_5} + \xi_1\p_{\xi_1} + \xi_4\p_{\xi_4})
+ (x_2\xi_1 - 2x_3\xi_4)\p_{\xi_2} - (x_2\xi_4 - x_4\xi_1)\p_{\xi_3},\\
3\xi_2\xi_3\p_{x_1} - \tfrac12(x_2^2 - 2x_3x_4)\p_{x_5} 
- x_6 (x_2\p_{x_2} + x_3\p_{x_3} + x_4\p_{x_4} + x_6\p_{x_6} + \xi_2\p_{\xi_2} + \xi_3\p_{\xi_3})
- (x_2\xi_2 - 2x_3\xi_3)\p_{\xi_1} + (x_2\xi_3 - x_4\xi_2)\p_{\xi_4},\\
3(\xi_2\xi_4-\xi_1\xi_3)\p_{x_1} + 2(x_3x_4+x_5x_6)\p_{x_2} 
+ x_2 (x_2\p_{x_2} + 2x_3\p_{x_3} + 2x_4\p_{x_4} 
+ 2x_5\p_{x_5} + 2x_6\p_{x_6} +\xi_1\p_{\xi_1} +\xi_2\p_{\xi_2} +\xi_3\p_{\xi_3} +\xi_4\p_{\xi_4})\\
+ 2(x_3\xi_4 + x_5\xi_2)\p_{\xi_1} + (2x_3\xi_3 + x_6\xi_1)\p_{\xi_2} + (x_4\xi_2 - x_6\xi_4)\p_{\xi_3} 
+ (x_4\xi_1 - 2x_5\xi_3)\p_{\xi_4}
\rangle,
 \end{multline*}

\vskip-20pt

 \begin{multline*}
(\g_1)_{\bar1}= \langle
   \xi_1 (3x_1\p_{x_1} -x_2\p_{x_2} -2x_3\p_{x_3} -2x_5\p_{x_5} +\xi_2\p_{\xi_2} +2\xi_3\p_{\xi_3} +\xi_4\p_{\xi_4})
- 2(x_3\xi_4 + x_5\xi_2)\p_{x_2} - 2(x_2\xi_4 + 2x_5\xi_3)\p_{x_4} \\
- 2(x_2\xi_2 - 2x_3\xi_3)\p_{x_6} +2x_1x_3\p_{\xi_2} +(x_1x_2 + \xi_2\xi_4)\p_{\xi_3} -2x_1x_5\p_{\xi_4}, \\
   \xi_2 (3x_1\p_{x_1} -x_2\p_{x_2} -2x_3\p_{x_3} - 2x_6\p_{x_6} +\xi_1\p_{\xi_1} +\xi_3\p_{\xi_3} +2\xi_4\p_{\xi_4})
- (2x_3\xi_3 + x_6\xi_1)\p_{x_2} - 2(x_2\xi_3 +x_6\xi_4)\p_{x_4}\\ 
- (x_2\xi_1 - 2x_3\xi_4)\p_{x_5} -2x_1x_3\p_{\xi_1} + x_1x_6\p_{\xi_3} - (x_1x_2 - \xi_1\xi_3)\p_{\xi_4}, \\
   \xi_3 (3x_1\p_{x_1} -x_2\p_{x_2} -2x_4\p_{x_4} - 2x_6\p_{x_6} +2\xi_1\p_{\xi_1} +\xi_2\p_{\xi_2} +\xi_4\p_{\xi_4})
- (x_4\xi_2 - x_6\xi_4)\p_{x_2} - (x_2\xi_2 - x_6\xi_1)\p_{x_3} \\
+ (x_2\xi_4 - x_4\xi_1)\p_{x_5}  - (x_1x_2 +\xi_2\xi_4)\p_{\xi_1}- x_1x_6\p_{\xi_2} - x_1x_4\p_{\xi_4}, \\
   \xi_4 (3x_1\p_{x_1} -x_2\p_{x_2} -2x_4\p_{x_4} -2x_5\p_{x_5} +\xi_1\p_{\xi_1} +2\xi_2\p_{\xi_2} +\xi_3\p_{\xi_3})
- (x_4\xi_1 -2x_5\xi_3)\p_{x_2} - (x_2\xi_1 - 2x_5\xi_2)\p_{x_3} \\
+ 2(x_2\xi_3 -x_4\xi_2)\p_{x_6} + 2x_1x_5\p_{\xi_1} + (x_1x_2 -\xi_1\xi_3)\p_{\xi_2} + x_1x_4\p_{\xi_3}
\rangle.
 \end{multline*}

\end{document}